\def\R{\mathbb{R}}
\def\N{\mathbb{N}}
\newcommand{\pvector}[1]{
  \begin{pmatrix}
    #1
  \end{pmatrix}} %
\newcommand{\ddirac}[1]{
  \,{\delta}\!\pvector{#1}\!} %
\renewcommand{\d}{\,{\rm d}} 
\newcommand{\la}{\lambda}
\newcommand{\vphi}{\varphi}
\providecommand{\ab}[1]{\vert #1\vert}
\providecommand{\abs}[1]{\Bigl\vert #1 \Bigr\vert}
\providecommand{\norma}[1]{\Vert #1 \Vert}
\providecommand{\bs}[1]{\boldsymbol{#1}}
\theoremstyle{plain}
\newtheorem{theorem}{Theorem}[section]
\newtheorem{lemma}[theorem]{Lemma}
\newtheorem{proposition}[theorem]{Proposition}
\theoremstyle{definition}
\newtheorem{remark}[theorem]{Remark}
\numberwithin{equation}{section}
\title[A comparison principle for convolution measures]{A comparison principle for convolution measures with applications}
\author[Oliveira e Silva]{Diogo Oliveira e Silva}
\address{
        Diogo Oliveira e Silva:
        School of Mathematics\\
        University of Birmingham\\
        Edgbaston\\
        Birmingham\\
        B15 2TT\\
        England,
        and
Hausdorff Center for Mathematics, Endenicher Allee 62, 53115 Bonn, Germany}
\email{d.oliveiraesilva@bham.ac.uk}
\thanks{\it D.O.S. was  supported by the Hausdorff Center for Mathematics and DFG grant CRC 1060.}
 \author[Quilodr\'an]{Ren\'e Quilodr\'an}
\address{Ren\'e Quilodr\'an}
\email{rquilodr@dim.uchile.cl}
\begin{document}

\subjclass[2010]{42A85, 42B10, 26B10}
\keywords{Sharp Fourier restriction theory, convolution of singular measures.}
\begin{abstract}
We establish the general form of a geometric comparison principle for $n$-fold convolutions of certain singular measures in $\R^d$ which holds for arbitrary $n$ and $d$.
This translates into a pointwise inequality between the convolutions of projection measure on the paraboloid and a perturbation thereof, and we use it to establish a new sharp Fourier extension inequality on a general convex perturbation of a parabola.
Further applications of the comparison principle to sharp Fourier restriction theory are discussed in the companion paper \cite{BOSQ18}.
\end{abstract}

\maketitle

\section{Introduction}

Let $\Sigma$ be a smooth compact hypersurface of $\R^{d+1}$, endowed with a surface-carried measure $\d\mu=\psi\d\sigma$.
Here $\sigma$ denotes the surface measure of $\Sigma$, and the function $\psi$ is smooth and non-negative.
In general, curvature of $\Sigma$ causes the Fourier transform of $\mu$ to decay, which in turn translates into a certain degree of regularity for the convolution powers $\mu^{\ast(n)}$.
To some extent, such considerations apply to the case of non-compact hypersurfaces as well.

In general, the analysis of convolution measures is a hard task.
In the compact setting, the computation reduces to a Fourier inversion, but in practice this is often non-trivial.
If the manifold in question has a large group of symmetries, then computations may become feasible. 
For instance, see
\cite{CFOST17, COS15, CS12a, CS12b, Fo15} for the case of surface measure on spheres and,
in the non-compact setting, 
see \cite{Ca09, Fo07, HZ06} for projection measure on paraboloids,
and 
\cite{COSS17, Fo07, Qu13, Qu15} for the Lorentz invariant measure on cones and hyperboloids.

Understanding convolution measures on {\it perturbations} of these highly symmetric manifolds is of theoretical interest, and naturally arises in applications. 
In \cite{OSQ16}, the authors established a  comparison principle for 2-fold convolutions of certain singular measures.
The purpose of this note is to extend this principle to $n$-fold convolutions,  and to present a sample application in the context of sharp Fourier restriction theory.\\

To state our main result, we introduce some notation.
Given a sufficiently nice function $\phi:\R^d\to\R$, consider the hypersurface in $\R^{d+1}$
\begin{equation}\label{eq:DefSigmaPhi}
\Sigma_\phi=\{({\bf y},|{\bf y}|^2+\phi({\bf y})):{\bf y}\in\R^d\},
\end{equation}
equipped with projection measure
$$\d\nu({\bf y},s)=\ddirac{s-|{\bf y}|^2-\phi({\bf y})}\d {\bf y}\d s.$$
Throughout the  paper, projection measure will be consistently denoted by $\nu$.
We recursively define its $n$-fold convolution  via $\nu^{\ast (2)}=\nu\ast\nu$ and $\nu^{\ast (n)}=\nu\ast\nu^{\ast (n-1)}$.

The following geometric comparison principle is our main result.
It holds in all dimensions $d\geq 1$, and generalizes \cite[Theorem 1.3]{OSQ16} to $n$-fold convolutions, for any $n\geq 2$.

\begin{theorem}\label{thm:Thm7}
For $d\geq 1$, let $\phi:\R^d\to\R$ be a nonnegative, continuously differentiable, strictly convex function. 
Let $\varphi=|\cdot|^2$ and $\psi=|\cdot|^2+\phi$.
Let $\nu_0, \nu$ denote the projection measures on the hypersurfaces $\Sigma_0, \Sigma_\phi$, respectively.
Then, for any integer $n\geq 2$,
\begin{equation}\label{eq:GeometricComparisonMainIneq}
\nu^{\ast(n)}(\boldsymbol{\xi},\tau)\leq 
\nu_0^{\ast(n)}(\boldsymbol{\xi},\tau-n\phi(\boldsymbol{\xi}/n)),
\end{equation}
for every $\boldsymbol{\xi}\in\R^d$ and $\tau>n\psi(\boldsymbol{\xi}/n)$.
Moreover, this inequality is strict at every point in the interior of the support of the measure $\nu^{\ast(n)}$.
\end{theorem}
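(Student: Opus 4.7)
I would prove the theorem directly, without recourse to induction on $n$, by reducing the inequality to a comparison between two $\delta$-integrals and then verifying it in polar coordinates.

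\emph{Step 1 (Reduction).} The density of $\nu^{\ast(n)}$ admits the parametrization
\[
\nu^{\ast(n)}(\boldsymbol{\xi},\tau)=\int_{\R^{d(n-1)}}\delta\Bigl(\tau - \sum_{i=1}^n\bigl[|\mathbf{y}_i|^2+\phi(\mathbf{y}_i)\bigr]\Bigr)\d\mathbf{y}_1\cdots\d\mathbf{y}_{n-1},
\]
with $\mathbf{y}_n:=\boldsymbol{\xi}-\sum_{i<n}\mathbf{y}_i$; the analogous formula with $\phi\equiv 0$ gives the density of $\nu_0^{\ast(n)}$. Setting $\mathbf{z}_i:=\mathbf{y}_i-\boldsymbol{\xi}/n$ (so $\sum_i\mathbf{z}_i=0$) and invoking the identity $\sum_i|\mathbf{y}_i|^2 = |\boldsymbol{\xi}|^2/n + \sum_i|\mathbf{z}_i|^2$, the theorem reduces to
\[
\int \delta(c-Q(\mathbf{z})-\Phi(\mathbf{z}))\d\mathbf{z}\leq \int \delta(c-Q(\mathbf{z}))\d\mathbf{z},
\]
with $c:=\tau - |\boldsymbol{\xi}|^2/n - n\phi(\boldsymbol{\xi}/n) > 0$, the form $Q(\mathbf{z}):=\sum_{i=1}^n|\mathbf{z}_i|^2$ a positive definite quadratic form in the free variables $\mathbf{z}_1,\ldots,\mathbf{z}_{n-1}$, and $\Phi(\mathbf{z}):=\sum_{i=1}^n\phi(\mathbf{z}_i+\boldsymbol{\xi}/n) - n\phi(\boldsymbol{\xi}/n)$. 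Convexity of $\phi$ combined with $\sum_i\mathbf{z}_i=0$ (which annihilates the affine part of the Taylor expansion of $\phi$ around $\boldsymbol{\xi}/n$) guarantees that $\Phi$ is convex, nonnegative, and satisfies $\Phi(0)=\nabla\Phi(0)=0$.

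\emph{Step 2 (Polar coordinates).} An invertible linear change of variable sends $Q$ to the standard Euclidean squared norm on $\R^N$ with $N:=d(n-1)\geq 1$, while preserving convexity, nonnegativity, and the double vanishing of $\Phi$ at the origin. Writing $\mathbf{z}=r\boldsymbol{\omega}$, the function $r\mapsto r^2+\Phi(r\boldsymbol{\omega})$ is continuous and strictly increasing from $0$ to $\infty$, so the equation $r^2+\Phi(r\boldsymbol{\omega})=c$ admits a unique solution $r(\boldsymbol{\omega})\in(0,\sqrt c]$. The comparison then reduces to the pointwise bound
\[
\frac{r(\boldsymbol{\omega})^{N-1}}{2r(\boldsymbol{\omega}) + \boldsymbol{\omega}\cdot\nabla\Phi(r(\boldsymbol{\omega})\boldsymbol{\omega})}\leq \frac{c^{(N-2)/2}}{2}\qquad(\boldsymbol{\omega}\in S^{N-1}).
\]
I would establish this via two convexity inputs. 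The secant--tangent inequality for the convex function $t\mapsto\Phi(t\boldsymbol{\omega})$ (vanishing at $t=0$) gives $\Phi(r\boldsymbol{\omega})\leq r\,\boldsymbol{\omega}\cdot\nabla\Phi(r\boldsymbol{\omega})$, which together with $\Phi(r\boldsymbol{\omega})=c-r^2$ yields $2r + \boldsymbol{\omega}\cdot\nabla\Phi(r\boldsymbol{\omega})\geq (r^2+c)/r$. Substituting $t:=r/\sqrt c\in(0,1]$ then reduces $2r^N\leq c^{(N-2)/2}(r^2+c)$ to $2t^N\leq t^2+1$, which holds for every $N\geq 1$ via the chain $2t^N\leq 2t\leq t^2+1$ (using $t\leq 1$ and AM--GM).

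\emph{Step 3 (Strictness and main obstacle).} Interior points of $\supp(\nu^{\ast(n)})$ correspond precisely to $c>0$, which forces $r(\boldsymbol{\omega})>0$ for all $\boldsymbol{\omega}\in S^{N-1}$. Strict convexity of $\phi$ upgrades the convexity of $t\mapsto\Phi(t\boldsymbol{\omega})$ to strict convexity along every nonzero ray, making the secant--tangent estimate strict; integration over $S^{N-1}$ then yields the strict version of \eqref{eq:GeometricComparisonMainIneq}. The main obstacle I anticipate is the execution of Step~2: verifying that the two convexity inputs combine cleanly into the correct pointwise polar bound, and handling the borderline regime $N=1$ (i.e.\ $d=1$, $n=2$), where both inputs saturate to equality cases of AM--GM and strict convexity of $\phi$ is indispensable for strictness.
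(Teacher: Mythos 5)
Your proposal is correct and takes a genuinely different route from the paper's. The authors' proof runs through Proposition \ref{prop:ConvolutionProperties} and Lemmas \ref{lem:n-convexity}--\ref{lem:n-contractive}: they implicitly define a scaling factor $\lambda$ along rays, build from it a diffeomorphism ${\bf T}$ carrying each level set $\mathcal E_\varphi(\boldsymbol{\xi},c)$ of $h_n$ onto the corresponding $\mathcal E_\psi(\boldsymbol{\xi},c)$ of $g_n$, compute $\det {\bf T}'$ via the Matrix Determinant Lemma, prove $|\det {\bf T}'|<1$, and change variables inside the $\delta$-integral. You instead avoid the Implicit Function Theorem and the ${\bf T}/{\bf S}$ machinery entirely: after recentering at $\boldsymbol{\xi}/n$ and a constant linear change of variable sending the positive definite form $Q$ to the Euclidean squared norm (which rescales both $\delta$-integrals by the same Jacobian and preserves convexity, nonnegativity, and double vanishing of $\Phi$), the inequality decouples into a radial estimate on each ray. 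For each $\boldsymbol{\omega}$, the unique root $r(\boldsymbol{\omega})\in(0,\sqrt c\,]$, the secant--tangent bound $\Phi(r\boldsymbol{\omega})\leq r\,\boldsymbol{\omega}\cdot\nabla\Phi(r\boldsymbol{\omega})$, and the elementary scalar inequality $2t^N\leq t^2+1$ on $(0,1]$ finish the job; strict convexity of $\phi$ makes the secant--tangent step strict, which is where the strictness of \eqref{eq:GeometricComparisonMainIneq} comes from (not from $2t^N\leq t^2+1$, which saturates at $t=1$ --- you have this located correctly). The argument buys a shorter, more elementary proof of Theorem \ref{thm:Thm7} alone. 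What it gives up is the modularity of the paper's route: the implicit function $\lambda$ and formula \eqref{eq:ConvolutionFormula} are reused in Proposition \ref{prop:ConvolutionProperties}(d) and Remark \ref{rem:tripleconvolution} to identify the boundary values and explicit form of $\nu_0^{\ast(n)}$, which Section \ref{sec:Perturbations} needs. One point worth spelling out in a full write-up: strict convexity of $t\mapsto\Phi(t\boldsymbol{\omega})$ for $\boldsymbol{\omega}\neq 0$ does indeed follow because the $n$ affine maps $\mathbf z\mapsto\mathbf z_i+\boldsymbol{\xi}/n$ jointly separate points (already the first $n-1$ projections do), so the sum of strictly convex composites is strictly convex.
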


\noindent Under the assumptions of the theorem, the support of the convolution measure $\nu^{\ast(n)}$ is contained in that of $\nu_0^{\ast(n)}$.
Moreover, both measures define continuous functions inside their supports and, as $\tau\to n\psi(\boldsymbol{\xi}/n)^+$,  the left- and right-hand sides of \eqref{eq:GeometricComparisonMainIneq} approach the boundary values of $\nu^{\ast(n)}$ and $\nu_0^{\ast(n)}$, respectively. 
These assertions follow from Proposition \ref{prop:ConvolutionProperties} below.
We emphasize that, at least when $(d,n)\neq (1,2)$, inequality \eqref{eq:GeometricComparisonMainIneq} is stronger than the mere claim
$$\nu^{\ast(n)}(\boldsymbol{\xi},\tau)\leq 
\nu_0^{\ast(n)}(\boldsymbol{\xi},\tau),\text{ for every } \boldsymbol{\xi}\in\R^d\text{ and }\tau>n\psi(\boldsymbol{\xi}/n).$$
Indeed, the function $\tau\mapsto \nu_0^{\ast(n)}(\boldsymbol{\xi},\tau)$ is non-decreasing,
as observed in Remark \ref{rem:tripleconvolution} below.\\

Connections with Fourier restriction theory \cite{St93, Ta04} and Strichartz estimates for partial differential equations \cite{St77} are to be expected.
The early proof of the Fourier restriction conjecture in the plane due to Fefferman \cite{Fe70} relied on a careful analysis of the convolution measure $f\sigma\ast f\sigma$, where $f$ is a function defined on the circle.
In a different direction, the seminal work of Tomas \cite{To75} used the $TT^*$ method to reduce matters to the study of the operator $f\mapsto f\ast\widehat{\sigma}$.
More recently, a related but distinct comparison principle was used in \cite{RS12} as an effective tool to understand the effects of global smoothing, to derive new estimates for dispersive equations from known ones, and to compare estimates for different equations.

Sharp Fourier restriction theory has received a lot of attention lately, see the recent survey \cite{FOS17} and the references therein.
In \cite{OSQ16}, we used the comparison principle for 2-fold convolutions as the main tool to study a number of questions arising from sharp Fourier restriction theory.
In particular, we computed the optimal constant for the adjoint restriction (or extension) $L^2-L^4$ inequality on the surface $\{({\bf y},|{\bf y}|^2+\phi({\bf y})): {\bf y}\in\R^2\}$,
 and proved that extremizers do not exist.
Our second result is the one-dimensional $L^2-L^6$ analogue of \cite[Theorem 1.2]{OSQ16}.

\begin{theorem}\label{thm:Thm6}
Let $\phi:\R\to\R$ be a nonnegative, twice continuously differentiable, strictly convex function, whose second derivative $\phi''$ satisfies one of the following conditions:
\begin{itemize}
\item[(i)] $\phi''(y_0)=0$, for some $y_0\in\R$, or
\item[(ii)] There exists a sequence ${y_n}\subset\R$ with $|y_n|\to\infty$, as $n\to\infty$, such that $\phi''(y_n)\to0$, as $n\to\infty$.
\end{itemize}
Let $\nu$ denote projection measure on the curve $\Sigma_\phi$.
Then the inequality 
\begin{equation}\label{eq:PertParabolaConvolutionForm}
\|f\nu\ast f\nu\ast f\nu\|^2_{L^2(\R^2)}\leq \frac{{\pi}}{\sqrt{3}} \|f\|_{L^2(\R)}^6
\end{equation}
holds for every $f\in L^2(\R)$, and is sharp.
The sequence $\{f_n\|f_n\|_{L^2}^{-1}\}$ defined via
\begin{equation}\label{eq:ExtSeqPertParabola}
f_n(y):= \left\{ \begin{array}{ll}
e^{-n(\psi(y)-\psi(y_0)-\psi'(y_0)(y-y_0))}, & \textrm{in case \emph{(i)},}\\
e^{-a_n(\psi(y)-\psi(y_n)-\psi'(y_n)(y-y_n))},& \textrm{in case \emph{(ii)},}
\end{array} \right.
\end{equation}
where $\psi:=|\cdot|^2+\phi$ and $\{a_n\}$ is an appropriately chosen sequence, 
is extremizing for  \eqref{eq:PertParabolaConvolutionForm}.
Moreover, extremizers for  \eqref{eq:PertParabolaConvolutionForm} do not exist.
\end{theorem}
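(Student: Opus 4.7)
The plan is to reduce \eqref{eq:PertParabolaConvolutionForm} to a pointwise upper bound on $\nu^{\ast 3}$ provided by Theorem \ref{thm:Thm7}, via a Cauchy--Schwarz argument of Foschi type. First I would view $(f\nu)^{\ast 3}(\xi,\tau)$ as an integral of $f(y_1)f(y_2)f(y_3)$ along the fiber $\{y_1+y_2+y_3=\xi,\ \psi(y_1)+\psi(y_2)+\psi(y_3)=\tau\}$ and apply Cauchy--Schwarz on that fiber to obtain
\begin{equation*}
\ab{(f\nu)^{\ast 3}(\xi,\tau)}^2 \le \nu^{\ast 3}(\xi,\tau)\cdot (f^2\nu)^{\ast 3}(\xi,\tau).
\end{equation*}
A direct calculation (change of variables $y_i = z_i+\xi/3$ with $z_1+z_2+z_3=0$, reducing the triple Dirac fiber to the area form of the ellipse $2(z_1^2+z_1z_2+z_2^2)=\tau-\xi^2/3$) gives $\nu_0^{\ast 3}\equiv \pi/\sqrt{3}$ on the interior of its support. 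Combined with Theorem \ref{thm:Thm7} in the case $n=3$, $d=1$, this yields $\nu^{\ast 3}(\xi,\tau)\le \pi/\sqrt{3}$, and integrating the pointwise inequality together with the mass identity $\int (f^2\nu)^{\ast 3} = \|f\|_{L^2}^6$ produces \eqref{eq:PertParabolaConvolutionForm}.

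For sharpness, I would compute the limit of $\|(f_n\nu)^{\ast 3}\|_{L^2}^2 \, \|f_n\|_{L^2}^{-6}$ as $n\to\infty$. In case (i), the condition $\phi''(y_0)=0$ implies $\psi(y)-\psi(y_0)-\psi'(y_0)(y-y_0) = (y-y_0)^2(1+o(1))$ near $y_0$, so (after subtracting the affine tangent to $\psi$ at $y_0$, which is a modulation/translation symmetry of the extension inequality) the weight $f_n$ is a sharply concentrated Gaussian-type bump of width $n^{-1/2}$. Rescaling $y = y_0 + n^{-1/2}t$, expanding $\psi$ and invoking dominated convergence should reduce the limit to the pure parabolic computation, in which Gaussians attain the constant $\pi/\sqrt{3}$ (Foschi; Hundertmark--Zharnitsky). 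Case (ii) is analogous; the freedom in choosing $\{a_n\}$ is used to calibrate the concentration scale against the drift of $y_n$ to infinity so that the rescaled problem still converges to the parabolic one.

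Non-existence is obtained by combining the two sources of sharpness just used. Suppose $f\not\equiv 0$ were an extremizer. Then equality would have to hold simultaneously in the Cauchy--Schwarz step and in the pointwise bound $\nu^{\ast 3}\le \pi/\sqrt{3}$ on $\supp((f^2\nu)^{\ast 3})$. However, the second assertion of Theorem \ref{thm:Thm7} guarantees the strict inequality $\nu^{\ast 3}(\xi,\tau) < \pi/\sqrt{3}$ at every point of the interior of $\supp(\nu^{\ast 3})$, and $(f^2\nu)^{\ast 3}$ charges a positive-measure subset of that interior whenever $f\not\equiv 0$, yielding the desired contradiction.

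The delicate step is the asymptotic analysis of the extremizing sequence. One must simultaneously control the threefold convolution of a sharply concentrated weight and the deviation of $\Sigma_\phi$ from the model parabola; the quantitative upper bound is inherited from Theorem \ref{thm:Thm7}, while the matching lower bound requires a Laplace-type expansion of $\psi$ near $y_0$ (respectively $y_n$) and uniform control over the tails of the Gaussian-type weights so that dominated convergence applies. Case (ii) is the most subtle, because $\{a_n\}$ must be calibrated against the a priori unspecified rate at which $\phi''(y_n)\to 0$---which is precisely the freedom left open by \eqref{eq:ExtSeqPertParabola}.
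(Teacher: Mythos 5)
Your proposal follows essentially the same route as the paper: the Foschi-type Cauchy--Schwarz reduction to the pointwise bound on $\nu^{\ast 3}$, the evaluation $\nu_0^{\ast 3}\equiv\pi/\sqrt{3}$ combined with Theorem \ref{thm:Thm7}, the concentration argument for sharpness (using the boundary value $(\nu\ast\nu\ast\nu)(3y_0,3\psi(y_0))=2\pi/(\sqrt{3}\,\psi''(y_0))=\pi/\sqrt{3}$ when $\phi''(y_0)=0$), and the strict inequality in Theorem \ref{thm:Thm7} to rule out extremizers. The only difference is cosmetic: the paper encapsulates the Laplace/concentration analysis in Lemma \ref{lem:upper-bound-concentration} (proved as in the quoted companion paper), whereas you sketch the rescaling $y=y_0+n^{-1/2}t$ directly; the diagonal choice of $\{a_n\}$ in case (ii) is handled identically.
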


\noindent The choice of the sequence $\{a_n\}$  will be clarified in the course of the proof of Theorem \ref{thm:Thm6}, 
which in turn relies on Theorem \ref{thm:Thm7}.

Our methods are further able to resolve a dichotomy from the recent literature concerning the existence of extremizers for certain Strichartz inequalities for higher order Schr\"odinger equations in one spatial dimension.
This question and related ones are explored in the companion paper \cite{BOSQ18}.
\\

{\bf Overview.}
The paper is organized as follows.
In \S \ref{ssec:Convolutions} we establish some useful facts about  convolutions of singular measures.
These are used in \S \ref{ssec:Pointwise} to prove Theorem \ref{thm:Thm7}.
We then prove Theorem \ref{thm:Thm6} in \S \ref{sec:Perturbations}.
\\

{\bf Notation.}
The usual inner product between vectors ${\bf x},{\bf y}\in\R^d$ will be denoted by $\langle {\bf x},{\bf y}\rangle$.
This distinguishes it from the $d\times d$ matrix obtained as the matrix product between  ${\bf x}$ and the transpose of  ${\bf y}$, denoted ${\bf x}\cdot {\bf y}^T$.
The usual matrix product between a $d\times d$ matrix ${\bf A}$ and a vector ${\bf x}\in \R^d$ will likewise be indicated by ${\bf A}\cdot {\bf x}$.

\section{A geometric comparison principle}\label{sec:Comparison}
This section is devoted to the proof of Theorem \ref{thm:Thm7}.
In our analysis, we will make use of the so-called {\it delta-calculus} to perform integration on manifolds, see \cite[Appendix A]{FOS17} for a concise treatment.
\subsection{Convolutions of singular measures}\label{ssec:Convolutions}
The following result is an extension of \cite[Proposition 2.1]{OSQ16} to the case of $n$-fold convolution measures.  
The proof relies on the Implicit Function Theorem.
Other approaches are presumably available,
see \cite{BM97} for a particular instance of the case $n=2$ which instead relies on the co-area formula.

\begin{proposition}\label{prop:ConvolutionProperties}
Let $d\geq 1$ and $n\geq 2$ be integers. 
Let $\psi:\R^d\to\R$ be a strictly convex, nonnegative function of class $C^2(\R^d)$.
Let $\nu$ denote projection measure $\d\nu({\bf y},s)=\ddirac{s-\psi({\bf y})}\d {\bf y}\d s$.
Then the following  holds for the $n$-fold convolution measure $\nu^{\ast(n)}$:
\begin{itemize}
\item[(a)] It is absolutely continuous with respect to Lebesgue measure on $\R^{d+1}$.
\item[(b)] Its support is given by 
\begin{equation}\label{eq:SupportConvolution}
\emph{supp}(\nu^{\ast(n)})=\{(\boldsymbol{\xi},\tau)\in \R^{d+1}: \tau\geq n\psi(\boldsymbol{\xi}/n)\}.
\end{equation}
\item[(c)] Its Radon--Nikodym derivative, also denoted by $\nu^{\ast(n)}$, is given by the formula
\begin{multline}\label{eq:ConvolutionFormula}
\nu^{\ast(n)}(\boldsymbol{\xi},\tau)=\\
\int_{\mathbb{S}^{d(n-1)-1}} \alpha^{d(n-1)-2}\Big(\sum_{i=1}^{n-1}\Big\langle \boldsymbol{\omega}_i, \frac{\nabla\psi(\boldsymbol{\xi}/n+\alpha\sum_{j=1}^{n-1}\boldsymbol{\omega}_j)-\nabla\psi(\boldsymbol{\xi}/n-\alpha\boldsymbol{\omega}_i)}{\alpha}\Big\rangle\Big)^{-1} \d\mu_{\boldsymbol{\omega}},
\end{multline}
provided $\tau>n\psi(\boldsymbol{\xi}/n)$.
Here, 
 $\mu_{\boldsymbol{\omega}}$ denotes surface measure on the unit sphere $\mathbb{S}^{d(n-1)-1}\subset\R^{d(n-1)}$,
$\boldsymbol{\omega}=(\boldsymbol{\omega}_1,\ldots,\boldsymbol{\omega}_{n-1})\in \mathbb{S}^{d(n-1)-1}$, 
$\boldsymbol{\omega}_i\in\R^d$,
and the function $\alpha$ is given by
\begin{equation}\label{eq:AlphaDef}
\alpha(\boldsymbol{\xi},\tau,\boldsymbol{\omega})=
\sqrt{\tau-n\psi(\boldsymbol{\xi}/n)}
\lambda(\sqrt{\tau-n\psi(\boldsymbol{\xi}/n)}\boldsymbol{\omega}),
\end{equation}
where the function $\lambda$ is implicitly defined via identity \eqref{eq:implicitlambda} below.
\item[(d)] It defines a continuous function of the variables $\boldsymbol{\xi},\tau$ in the interior of its support. 
If $(d,n)\neq (1,2)$ and the Hessian matrix of the function $\psi$ satisfies ${\bf H}(\psi)(\boldsymbol{\xi}/n)\neq {\bf 0}$ at some point $\boldsymbol{\xi}\in\R^d$, then the convolution extends continuously to the boundary point $(\boldsymbol{\xi},n\psi(\boldsymbol{\xi}/n))$, with values given by
\begin{equation}\label{eq:Formula3foldConvolution}
(\nu\ast\nu\ast\nu)(\xi,3\psi(\xi/3))=\frac{2\pi}{\sqrt{3}\psi''(\xi/3)},\text{ if } \xi\in\R,
\end{equation}
$$(\nu\ast\nu)(\boldsymbol{\xi},2\psi(\boldsymbol{\xi}/2))=\frac{\pi}{\sqrt{\det({\bf H}(\psi))(\boldsymbol{\xi}/2)}},\text{ if } \boldsymbol{\xi}\in\R^2,$$
$$\nu^{\ast(n)}(\boldsymbol{\xi},n\psi(\boldsymbol{\xi}/n))=0,\text{ if } \boldsymbol{\xi}\in\R^d \text{ and }(d,n)\notin\{(1,2),(1,3),(2,2)\}.$$
If $(d,n)=(1,2)$, then the following asymptotic formula holds:
\begin{equation}\label{eq:AsymptoticFormulad1n2}
(\nu\ast\nu)(\xi,\tau)\asymp \frac{1}{\psi''(\xi/2)\sqrt{\tau-2\psi(\xi/2)}\lambda(\sqrt{\tau-2\psi(\xi/2)})},\text{ as }\tau\downarrow2\psi(\xi/2),
\end{equation}
in the sense that the ratio of the right- and left-hand sides tends to 1, as $\tau\downarrow2\psi(\xi/2)$.
\end{itemize}
\end{proposition}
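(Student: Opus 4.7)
The plan is to pass to an explicit integral representation of the convolution and then study it via polar coordinates on $\R^{d(n-1)}$. Starting from
\[
\nu^{\ast(n)}(\boldsymbol{\xi},\tau)=\int_{(\R^d)^{n-1}}\delta\Bigl(\sum_{i=1}^{n}\psi(\mathbf{y}_i)-\tau\Bigr)\,\d\mathbf{y}_1\cdots\d\mathbf{y}_{n-1},
\]
with $\mathbf{y}_n:=\boldsymbol{\xi}-\sum_{i<n}\mathbf{y}_i$, I would perform the symmetrizing change of variables $\mathbf{y}_i=\boldsymbol{\xi}/n-\mathbf{z}_i$ for $i<n$, so that $\mathbf{y}_n=\boldsymbol{\xi}/n+\sum_{i<n}\mathbf{z}_i$, and set
\[
F(\mathbf{z})=\sum_{i=1}^{n-1}\psi(\boldsymbol{\xi}/n-\mathbf{z}_i)+\psi\Bigl(\boldsymbol{\xi}/n+\sum_{i=1}^{n-1}\mathbf{z}_i\Bigr),\qquad\mathbf{z}\in\R^{d(n-1)}.
\]
Strict convexity of $\psi$ forces $F$ to be strictly convex with unique minimum $F(\mathbf{0})=n\psi(\boldsymbol{\xi}/n)$ — Jensen's inequality applied to the $n$ evaluation points of $\psi$ — which yields the support identity (b). Away from $\mathbf{z}=\mathbf{0}$, $\{F=\tau\}$ is a smooth codimension-one submanifold of $\R^{d(n-1)}$, so the delta-calculus assignment produces a locally integrable density on $\R^{d+1}$, giving (a).

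For part (c), I would pass to spherical coordinates $\mathbf{z}=r\boldsymbol{\omega}$ on $\R^{d(n-1)}$ and note that, for each $\boldsymbol{\omega}\in\mathbb{S}^{d(n-1)-1}$, strict convexity of $r\mapsto F(r\boldsymbol{\omega})$ ensures a unique positive solution $r=r(\boldsymbol{\omega})$ of $F(r\boldsymbol{\omega})=\tau$. The distributional identity
\[
\delta(F(r\boldsymbol{\omega})-\tau)=\frac{\delta(r-r(\boldsymbol{\omega}))}{\partial_r F(r(\boldsymbol{\omega})\boldsymbol{\omega})}
\]
then produces
\[
\nu^{\ast(n)}(\boldsymbol{\xi},\tau)=\int_{\mathbb{S}^{d(n-1)-1}}\frac{r(\boldsymbol{\omega})^{d(n-1)-1}}{\partial_r F(r(\boldsymbol{\omega})\boldsymbol{\omega})}\,\d\mu_{\boldsymbol{\omega}},
\]
and computing $\partial_r F$ explicitly, while extracting one factor of $r=\alpha$ from each difference $\nabla\psi(\boldsymbol{\xi}/n+\alpha\sum_j\boldsymbol{\omega}_j)-\nabla\psi(\boldsymbol{\xi}/n-\alpha\boldsymbol{\omega}_i)$, yields \eqref{eq:ConvolutionFormula}. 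To establish \eqref{eq:AlphaDef}, I would write $r=t\lambda$ with $t=\sqrt{\tau-n\psi(\boldsymbol{\xi}/n)}$; the second-order vanishing $F(r\boldsymbol{\omega})-n\psi(\boldsymbol{\xi}/n)=\tfrac{r^{2}}{2}\langle\boldsymbol{\omega},\mathbf{A}\boldsymbol{\omega}\rangle+O(r^3)$, with $\mathbf{A}$ the positive definite Hessian of $F$ at the origin (built from $\mathbf{H}(\psi)(\boldsymbol{\xi}/n)$), renormalizes the defining equation for $\lambda$ into a form non-degenerate at $t=0$, so the Implicit Function Theorem produces a $C^{1}$ function $\lambda$ in a neighborhood of the origin in $\R^{d(n-1)}$.

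Part (d) follows by Lebesgue dominated convergence in the interior of the support, since the integrand in \eqref{eq:ConvolutionFormula} depends continuously on $(\boldsymbol{\xi},\tau)$ through $\alpha$ and is uniformly bounded on compact sets thanks to the coercivity of $\mathbf{A}$. The boundary behavior as $\tau\downarrow n\psi(\boldsymbol{\xi}/n)$ is controlled by the prefactor $\alpha^{d(n-1)-2}$: Taylor expansion gives
\[
\frac{\nabla\psi(\boldsymbol{\xi}/n+\alpha\sum_{j}\boldsymbol{\omega}_j)-\nabla\psi(\boldsymbol{\xi}/n-\alpha\boldsymbol{\omega}_i)}{\alpha}\longrightarrow\mathbf{H}(\psi)(\boldsymbol{\xi}/n)\cdot\Bigl(\boldsymbol{\omega}_i+\sum_j\boldsymbol{\omega}_j\Bigr),
\]
so the denominator tends to a fixed positive quadratic form in $\boldsymbol{\omega}$, while $\alpha^{d(n-1)-2}$ tends to $0$, a positive constant, or diverges like $\alpha^{-1}$ in the three regimes $d(n-1)\geq 3$, $d(n-1)=2$, and $d(n-1)=1$, respectively. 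In the two finite cases $(d,n)\in\{(1,3),(2,2)\}$, the remaining spherical integral of the inverse quadratic form is evaluated after diagonalizing $\mathbf{H}(\psi)(\boldsymbol{\xi}/n)$ by an orthogonal rotation, producing the displayed explicit constants. For $(d,n)=(1,2)$, the sphere $\mathbb{S}^0$ consists of two points and \eqref{eq:AsymptoticFormulad1n2} can be read off directly, the denominator contributing $2\psi''(\xi/2)$ in the limit. The main obstacle is the Implicit Function Theorem step: the defining equation for $\alpha$ degenerates at the boundary $t=0$, and one has to divide out the quadratic vanishing in a controlled way before applying IFT, and then check that the resulting $\lambda$ is regular enough at $t=0$ to legitimize the continuity and boundary statements in (d).
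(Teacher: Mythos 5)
Your argument is correct and reaches the same formula \eqref{eq:ConvolutionFormula} by an organizationally cleaner route. Where the paper first performs the fibered change of variables ${\bf y}={\bf T}({\bf w})=\lambda({\bf w}){\bf w}$ (whose Jacobian is computed with the Matrix Determinant Lemma) so that the level sets of $g_n(1,\cdot)$ become round spheres, and only then passes to polar coordinates, you parametrize the level set $\{F=\tau\}$ directly by the sphere $\mathbb S^{d(n-1)-1}$ and integrate the radial delta against the weight $1/\partial_r F$. The two computations are equivalent, since $\partial_r F(r\boldsymbol\omega)=\sum_i\langle\boldsymbol\omega_i,\nabla\psi(\boldsymbol\xi/n+r\sum_j\boldsymbol\omega_j)-\nabla\psi(\boldsymbol\xi/n-r\boldsymbol\omega_i)\rangle$ and $r(\boldsymbol\omega)=\alpha$, and your version avoids the ${\bf T}$-machinery (Lemma \ref{lem:transformation-general}), which is really designed for the comparison argument in Theorem \ref{thm:Thm7}, not for the present proposition.

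The one place where your write-up needs repair is the Implicit Function Theorem step for the behavior near $t=0$. You factor $g_n(s,\boldsymbol\omega)=s^2G(s,\boldsymbol\omega)$ and assert that the renormalized equation $\lambda^2 G(\lambda t,\boldsymbol\omega)=1$ is non-degenerate at $t=0$, so that $\lambda$ is $C^1$ in a full neighborhood of the origin. With $\psi\in C^2$ only, one has $G(s,\boldsymbol\omega)=\int_0^1(1-v)\,g_n''(sv,\boldsymbol\omega)\,\d v$, which is continuous but need not be $C^1$ in $s$ at $s=0$; your $C^1$ claim would need $\psi\in C^3$. This extra regularity is in fact unnecessary for the proposition: IFT is only needed away from $\mathbf{w}=\mathbf 0$ (where $\partial_\lambda g_n>0$ by strict convexity) to justify the change of variables in part (c), and for the boundary limits in part (d) it is enough to observe, as the paper does, that $g_n(\alpha,\boldsymbol\omega)=\tau-n\psi(\boldsymbol\xi/n)\to 0$ together with the fact that $g_n(\cdot,\boldsymbol\omega)$ has a unique zero at $0$ forces $\alpha\to 0$, after which the $C^2$ Taylor expansion of $\nabla\psi$ yields the limit of the integrand. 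If you replace your $C^1$-near-origin claim by this soft monotonicity argument, the proof goes through under the stated $C^2$ hypothesis.
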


\begin{proof}
To establish parts (a) and (b), it suffices to consider the case $n=2$, as the general case will then follow by induction. 
This in turn was proved in  \cite[Proposition 2.1]{OSQ16} for $d=2$, but as pointed out there the argument extends to general dimensions. 

We provide the details for parts (c) and (d). 
Let $(\boldsymbol{\xi},\tau)\in\R^{d+1}$ be such that $\tau> n\psi(\boldsymbol{\xi}/n)$. 
Changing variables ${\bf y}_i\rightsquigarrow \boldsymbol{\xi}/n-{\bf y}_i$, $1\leq i< n$, we have
\begin{align}
\nu^{\ast(n)}(\boldsymbol{\xi},\tau)
&=\int_{(\R^d)^n}
\ddirac{\tau-\sum_{i=1}^n\psi({\bf y}_i)}\ddirac{\boldsymbol{\xi}-\sum_{j=1}^n {\bf y}_j}\d 
{\bf y}_1\ldots\d {\bf y}_n\label{eq:FirstIntegralExpressionNuAstN}\\
&=\int_{(\R^d)^{n-1}}\ddirac{\tau-\sum_{i=1}^{n-1}\psi({\bf y}_i)-\psi(\boldsymbol{\xi}-\sum_{j=1}^{n-1}{\bf y}_j)}\d 
{\bf y}_1\ldots\d {\bf y}_{n-1}\notag\\
&=\int_{(\R^d)^{n-1}}\ddirac{\tau-n\psi(\boldsymbol{\xi}/n)-g_n(1,{\bf y})}\d {\bf y}_1\ldots \d {\bf y}_{n-1},\label{eq:IntegralExpressionNuAstN}
\end{align}
where the function $g_n$ is defined on pairs $(t,{\bf y})=(t,({\bf y}_1,\dots,{\bf y}_{n-1}))\in\R\times(\R^d)^{n-1}$ via
$$g_n(t,{\bf y}):=\sum_{i=1}^{n-1}\psi(\boldsymbol{\xi}/n-t{\bf y}_i)+\psi\Bigl(\boldsymbol{\xi}/n+t\sum_{j=1}^{n-1}{\bf y}_j\Bigr)-n\psi(\boldsymbol{\xi}/n).$$ 
We perform another change of variables  ${\bf y}={\bf T}({\bf w})=\lambda {\bf w}$, where $\lambda=\lambda({\bf w})$  is an 
implicit real-valued function of $\mathbf{w}=({\bf w}_1,\ldots,{\bf w}_{n-1})\in (\R^d)^{n-1}$ which takes only strictly positive values if $\mathbf{w}\neq\mathbf{0}$, and is defined via the identity
\begin{equation}\label{eq:implicitlambda}
g_n(1,\lambda {\bf w})=g_n(\lambda,{\bf w})=|{\bf w}|^2.
\end{equation}
For fixed $\boldsymbol{\xi}$, the Intermediate Value Theorem and strict convexity imply that a unique positive 
solution $\lambda=\lambda({\bf w})$ exists if ${\bf w}\neq {\bf 0}$, since $g_n(0,{\bf w})=0$ and $g_n(\lambda,{\bf w})\to\infty$, as 
$\lambda\to\infty$. 
By the Implicit Function Theorem, equation \eqref{eq:implicitlambda} defines $\lambda$ as a $C^1$  function of ${\bf w}$, provided that the derivative of the map
\[\lambda\mapsto \sum_{i=1}^{n-1}
\psi(\boldsymbol{\xi}/n-\lambda {\bf w}_i)
+\psi\Big(\boldsymbol{\xi}/n+\lambda \sum_{j=1}^{n-1}{\bf w}_j\Big)-n\psi(\boldsymbol{\xi}/n)
\]
is nonzero. In view of the strict convexity of the function $\psi$, this is indeed the case if $\lambda>0$. 
See Lemma \ref{lem:transformation-general} below for further details in a slightly more general context.
Since the function $\lambda$ is $C^1$ and ${\bf T}({\bf w})=\lambda({\bf w}){\bf w}$, we have that 
\begin{equation}\label{eq:1Tprime}
{\bf T}'({\bf w})=\lambda {\bf I}+{\bf w}\cdot (\nabla\lambda)^T,
\end{equation}
where ${\bf I}$ stands for the identity matrix in $\R^{d(n-1)}$, the gradient is taken with respect to 
${\bf w}$, and the term ${\bf w}\cdot (\nabla\lambda)^T$ denotes the $d(n-1)\times d(n-1)$ matrix obtained as the 
product 
of the vector ${\bf w}$ 
and the gradient $\nabla\lambda$. 
Implicit differentiation of \eqref{eq:implicitlambda} with respect to ${\bf w}$ yields
\begin{equation}\label{eq:2Tprime}
({\bf T}')^T({\bf w})\cdot {\bf u}=2{\bf w},
\end{equation}
where ${\bf u}=({\bf u}_1,\dotsc,{\bf u}_{n-1})$ is a vector-valued function of $({\bf w},\boldsymbol{\xi})$, given for $1\leq i< n$ by
\begin{equation}\label{eq:3Tprime}
{\bf u}_i=\nabla\psi\Bigl(\boldsymbol{\xi}/n+\lambda\sum_{j=1}^{n-1}{\bf w}_j\Bigr)-\nabla\psi(\boldsymbol{\xi}/n-\lambda {\bf w}_i). 
\end{equation}
From \eqref{eq:1Tprime} and \eqref{eq:2Tprime} it follows that 
\begin{equation}\label{eq:GradientLambda}
\nabla\lambda=\frac{2{\bf w}-\lambda {\bf u}}{\langle {\bf w}, {\bf u}\rangle}.
\end{equation}
Using the Matrix Determinant Lemma, 
\[\det {\bf T}'({\bf w})
=\det (\lambda {\bf I}+\nabla\lambda\cdot {\bf w}^T)
=(1+\lambda^{-1}\langle {\bf w},\nabla \lambda\rangle)\det(\lambda {\bf I}).
\]
Identity \eqref{eq:GradientLambda} then implies
\begin{equation}\label{eq:detTprime}
\det {\bf T}'({\bf w})=\frac{2|{\bf w}|^2}{\langle {\bf w}, {\bf u}({\bf w},\boldsymbol{\xi})\rangle}\lambda({\bf w})^{d(n-1)-1}.
\end{equation}
Note that this is a nonnegative quantity because of the strict convexity of $\psi$. 
Going back to the integral expression \eqref{eq:IntegralExpressionNuAstN}, changing variables as announced, and switching to spherical coordinates, yields
\begin{align*}
\nu^{\ast(n)}(\boldsymbol{\xi},\tau)
&=
\int_{{(\R^d)^{n-1}}}\ddirac{\tau-{n\psi(\boldsymbol{\xi}/n)}-|{\bf w}|^2} \det {\bf T}'({\bf w}) \d {\bf w}\\
&=
\int_{{0}}^\infty\ddirac{\tau-{n\psi(\boldsymbol{\xi}/n)}- r^2} \Big(\int_{\mathbb{S}^{d(n-1)-1}}\det 
{\bf T}'(r\boldsymbol{\omega}) \d\mu_{\boldsymbol{\omega}}\Big) r^{d(n-1)-1} \d r,
\end{align*}
where $\mu_{\boldsymbol{\omega}}$ denotes surface measure on the unit sphere $\mathbb{S}^{d(n-1)-1}$. 
Invoking \eqref{eq:detTprime}, changing variables $r^2\rightsquigarrow s$, and evaluating the inner integral,
\begin{align*}
\nu^{\ast(n)}(\boldsymbol{\xi},\tau)
&=\int_{\mathbb{S}^{d(n-1)-1}}
\Big(\int_{{0}}^\infty 
\ddirac{\tau-{n\psi(\boldsymbol{\xi}/n)}-s}\frac{\sqrt{s}\lambda(\sqrt{s}\boldsymbol{\omega})^{d(n-1)-1}}{\langle\boldsymbol{\omega}, 
{\bf u}(\sqrt{s} \boldsymbol{\omega},\boldsymbol{\xi})\rangle} \sqrt{s}^{{d(n-1)-2}}\d s\Big)
\d\mu_{\boldsymbol{\omega}}\\
&=\int_{\mathbb{S}^{d(n-1)-1}}
\frac{\bigl(\sqrt{\tau-n\psi(\boldsymbol{\xi}/n)}\lambda({\sqrt{\tau-n\psi(\boldsymbol{\xi}/n)}}\boldsymbol{\omega})\bigr)^{d(n-1)-1}}{\langle\boldsymbol{\omega},
{\bf u}({\sqrt{\tau-n\psi(\boldsymbol{\xi}/n)}} \boldsymbol{\omega},\boldsymbol{\xi})\rangle} 
\d\mu_{\boldsymbol{\omega}}.
\end{align*}
Formula \eqref{eq:ConvolutionFormula} now follows from 
the definition \eqref{eq:AlphaDef} of the function $\alpha=\alpha(\boldsymbol{\xi},\tau,\boldsymbol{\omega})$, and 
 the expression \eqref{eq:3Tprime} for the vector ${\bf u}$.
This concludes the verification of part (c).

As for part (d), the continuity of $\nu^{\ast(n)}$ in the interior of its support follows from an inspection of  formula \eqref{eq:ConvolutionFormula}, since the function $\lambda$ is continuous. 
As for boundary values, let us consider the case $(d,n)\neq (1,2)$ first.
Consider a boundary point $(\boldsymbol{\xi}_0,n\psi(\boldsymbol{\xi}_0/n))\in\R^{d+1}$, and suppose that the Hessian matrix ${\bf H}(\psi)(\boldsymbol{\xi}_0/n)$ is nonzero.
We claim that, for fixed $\boldsymbol{\omega}\in\mathbb{S}^{d(n-1)-1}$,
\begin{equation}\label{eq:AlphaLimitBoundary}
\lim_{(\boldsymbol{\xi},\tau)\to(\boldsymbol{\xi}_0,n\psi(\boldsymbol{\xi}_0/n))} \alpha(\boldsymbol{\xi},\tau,\boldsymbol{\omega})=0,
\end{equation}
where the limit is taken over points $(\boldsymbol{\xi},\tau)$ belonging to the interior of the support of $\nu^{\ast(n)}$.
The function $\lambda=\lambda({\bf w})$ satisfies identity \eqref{eq:implicitlambda}, which can be rewritten as
$$g_n(\lambda,{\bf w})=g_n(\alpha,\boldsymbol{\omega})=\tau-n\psi(\boldsymbol{\xi}/n),$$
where $\boldsymbol{\omega}\in \mathbb{S}^{d(n-1)-1}$, ${\bf w}=\sqrt{\tau-n\psi(\boldsymbol{\xi}/n)}\boldsymbol{\omega}$, and 
$\alpha=\alpha(\boldsymbol{\xi},\tau,\boldsymbol{\omega})$ is defined as in \eqref{eq:AlphaDef}. 
As $(\boldsymbol{\xi},\tau)\to(\boldsymbol{\xi}_0,n\psi(\boldsymbol{\xi}_0/n))$ from the interior of the support of $\nu^{\ast(n)}$, the quantity
$\tau-n\psi(\boldsymbol{\xi}/n)$ tends to 0. 
The function $g_n(\cdot,\boldsymbol{\omega})$ attains its unique global 
minimum at $\alpha=0$, where it equals zero. 
It follows that claim \eqref{eq:AlphaLimitBoundary} holds and, as $(\boldsymbol{\xi},\tau)\to (\boldsymbol{\xi}_0,n\psi(\boldsymbol{\xi}_0/n))$, we have
\begin{align}
\sum_{i=1}^{n-1}\Big\langle \boldsymbol{\omega}_i, &\frac{\nabla\psi(\boldsymbol{\xi}/n+\alpha\sum_{j=1}^{n-1}\boldsymbol{\omega}_j)-\nabla\psi(\boldsymbol{\xi}/n-\alpha\boldsymbol{\omega}_i)}{\alpha}\Big\rangle\to\notag\\
&\Big\langle \sum_{i=1}^{n-1} \boldsymbol{\omega}_i, {\bf H}(\psi)(\boldsymbol{\xi}_0/n)\cdot \sum_{i=1}^{n-1}\boldsymbol{\omega}_i\Big\rangle
+\sum_{i=1}^{n-1} \langle \boldsymbol{\omega}_i, {\bf H}(\psi)(\boldsymbol{\xi}_0/n)\cdot \boldsymbol{\omega}_i\rangle.\label{eq:LimitWithHessian}
\end{align}
This is a strictly positive quantity since $\psi$ is strictly convex and ${\bf H}(\psi)(\boldsymbol{\xi}_0/n)\neq 0$, and from formula \eqref{eq:ConvolutionFormula} we see that $\nu^{\ast(n)}(\boldsymbol{\xi},n\psi(\boldsymbol{\xi}/n))$ vanishes identically, except possibly when $d(n-1)=2$, i.e. $(d,n)\in\{(2,2),(1,3)\}$.
The former case was treated in \cite[Proposition 2.1]{OSQ16}, so we focus on the latter.
For every $\xi\in\R$, we have
\begin{align*}
(\nu\ast\nu\ast\nu)(\xi,3\psi(\xi/3))
&=\int_{\mathbb{S}^{1}} \frac{1}{\psi''(\xi/3)(\omega_1+\omega_2)^2+\psi''(\xi/3)(\omega_1^2+\omega_2^2)}
\d\mu_{(\omega_1,\omega_2)}\\
&=\frac{1}{\psi''(\xi/3)}\int_{0}^{2\pi}\frac{1}{2+2\sin\theta\cos\theta}\d\theta
=\frac{2\pi}{\sqrt{3}\,\psi''(\xi/3)},
\end{align*}
as claimed.
Finally, if $(d,n)=(1,2)$, then expression \eqref{eq:LimitWithHessian} equals $2\psi''(\xi_0/2)$.
Noting that the function $\lambda$ is even if $n=2$, we obtain \eqref{eq:AsymptoticFormulad1n2}.
The proof is now complete.
\end{proof}

\begin{remark}\label{rem:tripleconvolution}
Let us specialize to the case of the unperturbed paraboloid $\psi=|\cdot|^2$.
It was observed in \cite[Remark 2.2]{OSQ16} that formula \eqref{eq:ConvolutionFormula} for $d=n=2$ recovers the result from \cite[Lemma 3.2]{Fo07} for the 2-fold convolution of projection measure on the two-dimensional paraboloid.
In a similar way, if $(d,n)=(1,3)$, then the expression for the 3-fold convolution of projection measure $\nu_0$ on the parabola $\{\tau=\xi^2\}\subset\R^2$ given by formula \eqref{eq:ConvolutionFormula} reduces to 
$$(\nu_0\ast\nu_0\ast\nu_0)(\xi,\tau)
=\frac12\int_{\mathbb{S}^1}\frac{1}{\omega_1(2\omega_1+\omega_2)+\omega_2(\omega_1+2\omega_2)}\d\mu_{(\omega_1,\omega_2)}=\frac{\pi}{\sqrt{3}},$$
provided  $\tau>{\xi^2}/3$.
This recovers the value obtained in \cite[Lemma 4.1]{Fo07}.
For general $(d,n)$, one can check that, in the case $\psi=|\cdot|^2$, the function $\la=\la(\mathbf{w})$ implicitly defined by identity \eqref{eq:implicitlambda} is given by
\[ \la(\mathbf{w})=\frac{\ab{\mathbf{w}}}{\Bigl(\abs{\sum_{i=1}^{n-1}{\bf w}_i}^2+\sum_{j=1}^{n-1}\ab{{\bf w}_j}^2\Bigr)^{\frac12}}.\]
This is a homogeneous function of degree zero, and so the function $\alpha=\alpha(\bs\xi,\tau,\bs\omega)$ defined in \eqref{eq:AlphaDef} is given by
 $\alpha(\bs\xi,\tau,\bs\omega)=(\tau-\ab{\bs\xi}^2/n)^{\frac12}\,\la(\bs\omega)$.
Consequently, if $\tau>|\boldsymbol{\xi}|^2/n$, then
\begin{align*}
\nu_0^{\ast(n)}(\boldsymbol{\xi},\tau)&=
\frac{1}{2}\int_{\mathbb{S}^{d(n-1)-1}} \alpha^{d(n-1)-2}\Big(\sum_{i=1}^{n-1}\Big\langle \boldsymbol{\omega}_i, \sum_{j=1}^{n-1}\boldsymbol{\omega}_j+\boldsymbol{\omega}_i\Big\rangle\Big)^{-1} \d\mu_{\boldsymbol{\omega}}\\
&=\frac{1}{2}(\tau-\ab{\boldsymbol{\xi}}^2/n)^{\frac{d(n-1)}{2}-1}\int_{\mathbb{S}^{d(n-1)-1}} \la(\bs\omega)^{d(n-1)-2}\Big(\sum_{i=1}^{n-1}\Big\langle \boldsymbol{\omega}_i, \sum_{j=1}^{n-1}\boldsymbol{\omega}_j+\boldsymbol{\omega}_i\Big\rangle\Big)^{-1} \d\mu_{\boldsymbol{\omega}}\\
&=\frac{1}{2}(\tau-\ab{\boldsymbol{\xi}}^2/n)^{\frac{d(n-1)}{2}-1}\int_{\mathbb{S}^{d(n-1)-1}} \Big(\abs{\sum_{i=1}^{n-1} \boldsymbol{\omega}_i}^2 +\sum_{j=1}^{n-1}\ab{\boldsymbol{\omega}_j}^2\Big)^{-\frac{d(n-1)}{2}} \d\mu_{\boldsymbol{\omega}}.
\end{align*}
The latter integral can be computed in polar coordinates, see \cite{BBI15, Ca09}.
 Alternatively, the value of the constant $c_{d,n}$ in the expression
 \begin{equation}\label{eq:PreNFoldProj}
\nu_0^{\ast(n)}(\boldsymbol{\xi},\tau)=c_{d,n}(\tau-\ab{\boldsymbol{\xi}}^2/n)_+^{\frac{d(n-1)}{2}-1}
\end{equation}
can be determined by simply multiplying both sides of \eqref{eq:PreNFoldProj} by the factor $\exp(-\tau)$ and integrating in $\boldsymbol{\xi},\tau$.
Indeed, recall \eqref{eq:FirstIntegralExpressionNuAstN} and observe that
 \begin{align*}
 \int_{\R^{d+1}} e^{-\tau}& \nu_0^{\ast(n)}(\boldsymbol{\xi},\tau) \d\boldsymbol{\xi}\d\tau\\
& =\int_{\R^{d+1}} \int_{(\R^d)^n} e^{-\tau}\ddirac{\tau-\sum_{i=1}^n|{\bf y}_i|^2}\ddirac{\boldsymbol{\xi}-\sum_{j=1}^n {\bf y}_j}\d {\bf y}_1\ldots\d {\bf y}_n \d\boldsymbol{\xi}\d\tau\\
& =\int_{(\R^d)^n} e^{-\sum_{i=1}^{n}|{\bf y}_i|^2}\d {\bf y}_1\ldots\d {\bf y}_n=\pi^{\frac{dn}2}.
 \end{align*} 
 On the other hand, a simple change of variables yields
 $$ \int_{\R^{d+1}} e^{-\tau} (\tau-\ab{\boldsymbol{\xi}}^2/n)_+^{\frac{d(n-1)}{2}-1} \d\boldsymbol{\xi}\d\tau=(n\pi)^{\frac d2}\Gamma(\tfrac{d(n-1)}2),$$
and therefore
$$c_{d,n}=\frac{\pi^{\frac{d(n-1)}2}}{n^{\frac d2}\Gamma(\frac{d(n-1)}2)}.$$
In particular, Proposition \ref{prop:ConvolutionProperties} generalizes the formula obtained in \cite[Lemma 2.4]{BBI15}.
Moreover, we see from \eqref{eq:PreNFoldProj} that $\nu_0^{\ast(n)}(\boldsymbol{\xi},\cdot)$ defines a non-decreasing function of $\tau$ on the region $\{\tau>|\boldsymbol{\xi}|^2/n\}$, for every fixed $\boldsymbol{\xi}\in\R^d$ and $(d,n)\neq (1,2)$.
\end{remark}

\begin{remark}\label{rem:WeightedVersion}
From the proof of Proposition \ref{prop:ConvolutionProperties}, it is clear that a similar statement holds in the weighted setting. 
Let $w:\R^d\to\R$ be a continuous function. 
Parts (a)--(d) in the statement of Proposition \ref{prop:ConvolutionProperties} hold for the convolution measure $(w\nu)^{\ast(n)}$, with minor modifications which we now indicate.
Firstly, in general only inclusion $\subseteq$ holds in \eqref{eq:SupportConvolution} instead of equality; there is equality if $w>0$. 
Secondly, defining
\[ W(\boldsymbol{\xi};{\bf y}_1,\dotsc,{\bf y}_{n-1})
:=w(\boldsymbol{\xi}/n+{\bf y}_1+\dotsb+{\bf y}_{n-1})\prod_{i=1}^{n-1}w(\boldsymbol{\xi}/n- {\bf y}_i), \]
we have the substitute formula for \eqref{eq:ConvolutionFormula},
\begin{align*} 
(w\nu)^{\ast(n)}(\boldsymbol{\xi},\tau)
&=\int_{\mathbb{S}^{d(n-1)-1}}\alpha^{d(n-1)-2}W(\boldsymbol{\xi};\alpha\boldsymbol{\omega}_1,\dots,\alpha\boldsymbol{\omega}_{n-1})\\
&\qquad\biggl(\sum_{i=1}^{n-1}\Big\langle \omega_i, \frac{\nabla 
	\psi(\boldsymbol{\xi}/n+\alpha\sum_{j=1}^{n-1}\boldsymbol{\omega}_j)-\nabla 
	\psi(\boldsymbol{\xi}/n-\alpha\boldsymbol{\omega}_i)}{\alpha}\Big\rangle\biggr)^{-1}
\d\mu_{\boldsymbol{\omega}},
\end{align*}
where $\alpha$ is defined in \eqref{eq:AlphaDef} and is independent of the weight $w$. 
Lastly, the convolution $(w\nu)^{\ast(n)}$ defines a continuous function of 
$\boldsymbol{\xi},\tau$ in the interior of its support. If $(d,n)\neq (1,2)$  and the matrix ${\bf H}(\psi)(\boldsymbol{\xi}/n)$ is nonzero, 
then the convolution  
extends continuously to the boundary point $(\boldsymbol{\xi},n\psi(\boldsymbol{\xi}/n))$,
with values given by
$$(w\nu\ast w\nu\ast w\nu)(\xi,3\psi(\xi/3))=\frac{2\pi w(\xi/3)^3}{\sqrt{3}\,\psi''(\xi/3)},\text{ if 
}\xi\in\R,$$
$$(w\nu\ast w\nu)(\boldsymbol{\xi},2\psi(\boldsymbol{\xi}/2))
={\frac{\pi w(\boldsymbol{\xi}/n)^{2}}{\sqrt{\det({\bf H}(\psi)(\boldsymbol{\xi}/2))}}},\text{ if }\boldsymbol{\xi}\in\R^{2},$$
$$(w\nu)^{\ast(n)}(\boldsymbol{\xi},n\psi(\boldsymbol{\xi}/n))= 0, \text{ if }\boldsymbol{\xi}\in\R^{d}  \text{ and }(d,n)\notin\{(1,2),(1,3),(2,2)\}.$$
If $(d,n)=(1,2)$, then the following asymptotic formula holds:
\begin{equation*}\label{weight-asymp-d1n2}
(w\nu\ast w\nu) (\xi,\tau)\asymp 
\frac{w(\xi/2)^2}{\psi''(\xi/2)\sqrt{\tau-2\psi(\xi/2)}\;\lambda(\sqrt{\tau-2\psi(\xi/2)})},
\text{ as } \tau\downarrow 2\psi(\xi/2),
\end{equation*}
again in the sense that the ratio of the right- and left-hand sides tends to $1$, as $\tau\downarrow 2\psi(\xi/2)$.
\end{remark}

\subsection{A pointwise inequality for convolution measures}\label{ssec:Pointwise}
It was remarked in \cite[\S 3]{OSQ16} that $n$-linear versions of  \cite[Lemmata 3.2 and 3.3]{OSQ16} seemed more intricate if $n\geq 3$.
Here we overcome this difficulty by constructing appropriate inverses to the maps $\lambda$ and ${\bf T}$ considered in the previous subsection.
This leads to a proof of  Theorem \ref{thm:Thm7}.

Let $d\geq 1$ and $n\geq 2$.
Consider two convex functions $\psi,\varphi:\R^d\to\R$.
 Given $\boldsymbol{\xi}\in\R^d$ and ${\bf y}=({\bf y}_1,\dotsc,{\bf y}_{n-1})$ with ${\bf y}_i\in\R^d$, $1\leq i<n$, define the following auxiliary 
 functions acting on pairs $(t,{\bf y})\in\R\times\R^{d(n-1)}$:
 \begin{align}
 g_n(t,{\bf y})&:=\sum_{i=1}^{n-1}\psi(\boldsymbol{\xi}/n-t{\bf y}_i)+\psi\Bigl(\boldsymbol{\xi}/n+t\sum_{j=1}^{n-1}{\bf y}_j\Bigr)-n\psi(\boldsymbol{\xi}/n), \label{eq:DefGn}\\
 h_n(t,{\bf y})&:=\sum_{i=1}^{n-1}\varphi(\boldsymbol{\xi}/n-t{\bf y}_i)+\varphi\Bigl(\boldsymbol{\xi}/n+t\sum_{j=1}^{n-1}{\bf y}_j\Bigr)-n\varphi(\boldsymbol{\xi}/n). \label{eq:DefHn}
 \end{align}
 We are omitting the dependence on $\boldsymbol{\xi}$, which we assume to be fixed.
 By another slight abuse of notation, we will sometimes drop the dependence on ${\bf y}$ and simply write $g_n(t)=g_n(t,{\bf y})$, and similarly for $h_n$.
 Note that $g_n=h_n\equiv 0$ if ${\bf y}={\bf 0}$.
 The following generalization of \cite[Lemma 3.1]{OSQ16} holds.
 
  \begin{lemma}\label{lem:n-convexity}
 	Let $d\geq 1$ and $n\geq 2$. Let $\psi,\varphi:\R^d\to\R$ be differentiable, convex functions, 
 	such that their difference $\psi-\varphi$ is also  convex.
 	Given $\boldsymbol{\xi},{\bf y}_1,\ldots,{\bf y}_{n-1}\in\R^d$, 
	define the functions $g_n, h_n$ as above. 
	Write ${\bf y}=({\bf y}_1,\ldots,{\bf y}_{n-1})$.
	Then:
 	\begin{enumerate}
 		\item[(a)]  $g_n(t)\geq h_n(t)\geq 0$, for every $t\in\R$.
 		\item[(b)] The functions $g_n$ and $h_n$ are convex.
 		\item[(c)] $g_n'(0)=h_n'(0)=0$.
 		\item[(d)] If $\psi$ is strictly convex and ${\bf y}\neq {\bf 0}$, then 
 		$g_n$ attains its unique global minimum at $t=0$.
 		\item[(e)] If $\psi$ is strictly convex and ${\bf y}\neq {\bf 0}$, then 
 		there exists a unique nonnegative $\lambda=\lambda({\bf y},\boldsymbol{\xi})$ such that
 		\begin{equation}\label{eq:DefLambdaViaHG}
		h_n(1,{\bf y})=g_n(\lambda,{\bf y}),
		\end{equation}
 		and moreover $0\leq \lambda\leq 1$.
 		\item[(f)] If $\varphi$ is strictly convex and ${\bf y}\neq {\bf 0}$, 
 		then there exists a unique nonnegative $\rho=\rho({\bf y},\boldsymbol{\xi})$ such that
 		\begin{equation}\label{eq:DefRhoViaHG}
		h_n(\rho,{\bf y})=g_n(1,{\bf y}),
		\end{equation}
		and moreover $\rho\geq 1$.
 		\item[(g)] If $h_n(1,{\bf y})>0$, then $\lambda({\bf y},\boldsymbol{\xi})>0$. If $h_n(1,{\bf y})<g_n(1,{\bf y})$, then 
 		$\lambda({\bf y},\boldsymbol{\xi})<1$ and 
 		$\rho({\bf y},\boldsymbol{\xi})>1$.
 		\item[(h)] Assume that $\psi, \varphi$ are strictly convex functions, and let ${\bf y}\neq {\bf 0}$. 
 		Letting ${\bf u}=\lambda({\bf y},\boldsymbol{\xi}){\bf y}$ and ${\bf v}=\rho({\bf y},\boldsymbol{\xi}){\bf y}$, 
		we have
 		\[ \rho({\bf u},\boldsymbol{\xi})\lambda({\bf y},\boldsymbol{\xi})=1=\rho({\bf y},\boldsymbol{\xi})\lambda({\bf v},\boldsymbol{\xi}). \]
  	\end{enumerate}
 \end{lemma}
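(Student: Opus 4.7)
The plan is to prove parts (a)--(h) largely in order, exploiting the key observation that $g_n$ and $h_n$ are convex in $t$ with a vanishing derivative at $t=0$, and that they scale in the variable ${\bf y}$ in a compatible way.

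For (a), I would read $h_n(t,{\bf y})$ as $n$ times the difference between the average of $\varphi$ at the $n$ points $\boldsymbol{\xi}/n-t{\bf y}_i$ ($1\leq i<n$) and $\boldsymbol{\xi}/n+t\sum_j{\bf y}_j$, whose arithmetic mean is $\boldsymbol{\xi}/n$, and $\varphi(\boldsymbol{\xi}/n)$; Jensen's inequality then gives $h_n\geq 0$. The inequality $g_n\geq h_n$ follows by applying the same argument to $\eta:=\psi-\varphi$, which is convex by hypothesis. Part (b) is immediate because each summand in $g_n$ and $h_n$ is the composition of a convex function with an affine map in $t$. For (c), direct differentiation gives $g_n'(0)=\langle -\sum_i{\bf y}_i+\sum_j{\bf y}_j,\nabla\psi(\boldsymbol{\xi}/n)\rangle=0$, and similarly for $h_n$.

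For (d), strict convexity of $\psi$ together with ${\bf y}\neq{\bf 0}$ guarantees that at least one summand of $g_n$ is strictly convex in $t$ (either some ${\bf y}_i\neq 0$, or $\sum_j{\bf y}_j\neq 0$, or indeed both), so $g_n$ is strictly convex; combined with (c), this gives a unique global minimum at $t=0$, where $g_n=0$. This in turn means $g_n$ is strictly increasing on $[0,\infty)$ and tends to $+\infty$ there, so (e) follows from the Intermediate Value Theorem applied on $[0,1]$, using $0=g_n(0)\leq h_n(1)\leq g_n(1)$ from (a). Part (f) is symmetric: apply the same reasoning to $h_n$ (which is strictly convex because $\varphi$ is) and use $g_n(1)\geq h_n(1)$ to locate $\rho\geq 1$. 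Part (g) is then a direct consequence of the strict monotonicity of $g_n$ and $h_n$ on $[0,\infty)$: $h_n(1)>0=g_n(0)$ forces $\lambda>0$, and $h_n(1)<g_n(1)$ forces both $\lambda<1$ and $\rho>1$.

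The main point of the lemma is (h), and the key is the scaling identity
\begin{equation*}
g_n(t,c{\bf y})=g_n(ct,{\bf y}),\qquad h_n(t,c{\bf y})=h_n(ct,{\bf y}),\qquad t,c\in\R,
\end{equation*}
which holds because both functions depend on ${\bf y}$ only through the products $t{\bf y}_i$ and $t\sum_j{\bf y}_j$. Setting $\lambda=\lambda({\bf y},\boldsymbol{\xi})$ and ${\bf u}=\lambda{\bf y}$, and using the defining identities \eqref{eq:DefLambdaViaHG}--\eqref{eq:DefRhoViaHG}, the scaling gives
\begin{equation*}
h_n(\rho({\bf u},\boldsymbol{\xi})\lambda,{\bf y})=h_n(\rho({\bf u},\boldsymbol{\xi}),{\bf u})=g_n(1,{\bf u})=g_n(\lambda,{\bf y})=h_n(1,{\bf y}).
\end{equation*}
Since $h_n(\cdot,{\bf y})$ is strictly increasing on $[0,\infty)$ by (d) applied to $\varphi$, we conclude $\rho({\bf u},\boldsymbol{\xi})\lambda=1$. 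The identity $\rho({\bf y},\boldsymbol{\xi})\lambda({\bf v},\boldsymbol{\xi})=1$ with ${\bf v}=\rho({\bf y},\boldsymbol{\xi}){\bf y}$ follows by the completely analogous computation, interchanging the roles of $g_n,h_n$ and $\lambda,\rho$. I do not anticipate any serious obstacle beyond keeping the scaling and the monotonicity arguments straight; the only part that requires a moment of care is ensuring, in (d), that strict convexity of $g_n$ really does hold for all ${\bf y}\neq{\bf 0}$, including the degenerate-looking case $\sum_j{\bf y}_j=0$, which is covered because then some ${\bf y}_i\neq{\bf 0}$.
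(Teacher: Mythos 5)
Your argument is correct and essentially matches the paper's: parts (a)--(g) follow from the same convexity, Jensen, and intermediate-value considerations that the paper defers to an adaptation of \cite[Lemma 3.1]{OSQ16}, and your proof of (h) via the scaling identity $g_n(t,c\mathbf{y})=g_n(ct,\mathbf{y})$ (and its $h_n$ analogue) is exactly the chain of equalities the paper works through, with the strict monotonicity of $h_n(\cdot,\mathbf{y})$ on $[0,\infty)$ playing the same role as the uniqueness clause of part (f) does in the paper's version.
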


 \begin{proof}
  	We focus on part (h) as the other assertions follow easily from an adaptation of the proof of \cite[Lemma 3.1]{OSQ16}.
	Since 
 	${\bf y}=({\bf y}_1,\dotsc,{\bf y}_{n-1})\neq ({\bf 0},\dotsc,{\bf 0})$, the strict convexity of $\varphi$ implies $h_n(1,{\bf y})>0$, 
 	and so from part (g) it follows that $\lambda({\bf y},\boldsymbol{\xi})>0$. 
	In particular,  the vector 
 	${\bf u}=({\bf u}_1,\dotsc,{\bf u}_{n-1})=\lambda({\bf y}_1,\dots,{\bf y}_{n-1},\boldsymbol{\xi})({\bf y}_1,\dots,{\bf y}_{n-1})\in (\R^d)^{n-1}$ is nonzero. 
	By definition, the function $\rho({\bf u},\boldsymbol{\xi})$ is the unique solution 
 	of the equation 
 	$h_n(\rho,{\bf u})=g_n(1,{\bf u})$.
 	\noindent Note that $g_n(1,{\bf u})=g_n(1,\lambda {\bf y})=g_n(\lambda,{\bf y})$. 
	By definition of $\lambda({\bf y},\boldsymbol{\xi})$ in part (e), we have 
 	$g_n(\lambda,{\bf y})=h_n(1,{\bf y})$, and one easily checks that 
	$h_n(1,{\bf y})=h_n(1/\lambda,\lambda {\bf y})=h_n(1/\lambda,{\bf u})$. Thus
 	\[ h_n(\tfrac1\lambda,{\bf u})=g_n(1,{\bf u}). \]
 	By definition of $\rho({\bf u},\boldsymbol{\xi})$ and the  uniqueness statement in part (f), we 
 	necessarily have $\rho({\bf u},\boldsymbol{\xi})=1/\lambda({\bf y},\boldsymbol{\xi})$. 
	The second part follows in a similar way, noting that
 	\[ h_n(1,{\bf v})=h_n(1,\rho {\bf y})=h_n(\rho,{\bf y})=g_n(1,{\bf y})=g_n(1/\rho, \rho {\bf y})=g_n(1/\rho,{\bf v}), \]
 	so that $\lambda({\bf v},\boldsymbol{\xi})=1/\rho({\bf y},\boldsymbol{\xi})$.
 \end{proof}
 
 Henceforth we restrict attention to strictly convex,  $C^1$ functions $\psi, \varphi:\R^d\to\R$, and introduce two sets which play a  role in the proof of Theorem \ref{thm:Thm7}. Given $\boldsymbol{\xi}\in\R^d$ and 
$c\in\R$, define the {\it ellipsoids} 
\begin{align*}\label{eq:psi-ellipsoid}
 \mathcal{E}_\psi(\boldsymbol{\xi},c)&:=\{{\bf y}\in\R^{d(n-1)}:g_n(1,{\bf y})=c\}, \\
 \mathcal{E}_\varphi(\boldsymbol{\xi},c)&:=\{{\bf y}\in\R^{d(n-1)}:h_n(1,{\bf y})=c\}.
 \end{align*}
The sets  $\mathcal E_\psi(\boldsymbol{\xi},c)$ and $\mathcal E_\varphi(\boldsymbol{\xi},c)$ are non-empty provided $c\geq 0$, and codimension 1 hypersurfaces if $c>0$. 
Moreover, for each fixed $\boldsymbol{\xi}\in\R^d$, the disjoint union of the ellipsoids 
$\mathcal E_\psi(\boldsymbol{\xi},c)$ as the parameter $c\geq 0$ ranges over the nonnegative real numbers equals 
the whole of $(\R^d)^{n-1}$, and similarly for $\varphi$.
Now, for each $\boldsymbol{\xi}\in\R^d$, define the map
\begin{equation}\label{eq:n-T}
{\bf T}\colon\R^{d(n-1)}\setminus\{0\}\to\R^{d(n-1)}\setminus\{0\},\quad {\bf T}({\bf y}):=\lambda({\bf y},\boldsymbol{\xi}){\bf y},
\end{equation} 
where $\lambda$ is the unique positive solution of \eqref{eq:DefLambdaViaHG}.
We also define the map 
\begin{equation}\label{eq:n-S}
{\bf S}\colon\R^{d(n-1)}\setminus\{0\}\to\R^{d(n-1)}\setminus\{0\},\quad {\bf S}({\bf y}):=\rho({\bf y},\boldsymbol{\xi}){\bf y},
\end{equation}
where $\rho$ is the unique positive 
solution of \eqref{eq:DefRhoViaHG}.
That ${\bf T}, {\bf S}$ are well-defined follows from the strict convexity of $\psi, \varphi$, together with Lemma 
\ref{lem:n-convexity}. 
Further properties of the maps ${\bf T, S}$ are contained in the following lemma.

\begin{lemma}\label{lem:transformation-general}
	Let $\psi,\varphi:\R^d\to\R$ be strictly convex, $C^1$ functions 
	with a convex difference $\psi-\varphi$.
	Let $\boldsymbol{\xi}\in \R^d$ be given, and consider the transformations ${\bf T}$ and ${\bf S}$ given by 
	\eqref{eq:n-T} and \eqref{eq:n-S}, respectively. Then:
	\begin{enumerate}
		\item[(a)] ${\bf T}$ and ${\bf S}$ are inverse maps. 
		\item[(b)] ${\bf T}$ and ${\bf S}$ are continuously differentiable.
		\item[(c)] If ${\bf T}'({\bf y})$ and ${\bf S}'({\bf y})$ respectively denote the Jacobian matrices of ${\bf T}$ and ${\bf S}$ at a point ${\bf y}\neq {\bf 0}$, then 
		\begin{align*}
		\det {\bf T}'({\bf y})
		&=\la({\bf y})^{d(n-1)-1}\frac{\sum_{i=1}^{n-1}\bigl\langle\nabla\vphi(\boldsymbol{\xi}/n+\sum_{j=1}^{n-1}{\bf y}_j)-
		 \nabla\vphi (\boldsymbol{\xi}/n-{\bf y}_i), 
			{\bf y}_i\bigr\rangle}{\sum_{i=1}^{n-1}\bigl\langle\nabla\psi(\boldsymbol{\xi}/n+\la\sum_{j=1}^{n-1}{\bf y}_j)- 
			\nabla\psi 
			(\boldsymbol{\xi}/n-\la {\bf y}_i), {\bf y}_i\bigr\rangle},\\
		\det {\bf S}'({\bf y})
		&=\rho({\bf y})^{d(n-1)-1}\frac{\sum_{i=1}^{n-1}\bigl\langle\nabla\psi(\boldsymbol{\xi}/n+\sum_{j=1}^{n-1}{\bf y}_j)-
			\nabla\psi (\boldsymbol{\xi}/n-{\bf y}_i), 
			{\bf y}_i\bigr\rangle}{\sum_{i=1}^{n-1}\bigl\langle\nabla\vphi(\boldsymbol{\xi}/n+\rho\sum_{j=1}^{n-1}{\bf y}_j)-
			 \nabla\vphi (\boldsymbol{\xi}/n-\rho {\bf y}_i), {\bf y}_i\bigr\rangle}.
		\end{align*} 
		\item[(d)]  If $c>0$, then ${\bf T}$ defines a bijection from $\mathcal E_\varphi(\boldsymbol{\xi},c)$ onto $\mathcal E_\psi(\boldsymbol{\xi},c)$, and ${\bf S}$ defines a bijection from $\mathcal E_\psi(\boldsymbol{\xi},c)$ onto  $\mathcal E_\varphi(\boldsymbol{\xi},c)$	.
		\end{enumerate}
\end{lemma}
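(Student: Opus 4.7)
Part (a) is an immediate translation of Lemma \ref{lem:n-convexity}(h). Given ${\bf y}\neq {\bf 0}$, set ${\bf u}={\bf T}({\bf y})=\lambda({\bf y},\boldsymbol{\xi}){\bf y}$; the identity $\rho({\bf u},\boldsymbol{\xi})\lambda({\bf y},\boldsymbol{\xi})=1$ then gives ${\bf S}({\bf T}({\bf y}))=\rho({\bf u},\boldsymbol{\xi}){\bf u}={\bf y}$, and symmetrically ${\bf T}\circ{\bf S}=\mathrm{id}$. For part (b) I invoke the Implicit Function Theorem on $g_n(\lambda,{\bf y})=h_n(1,{\bf y})$. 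Since $\psi, \varphi$ are $C^1$, so are $g_n$ and $h_n$. Convexity of $g_n(\cdot,{\bf y})$ together with its unique minimum at $t=0$ (Lemma \ref{lem:n-convexity}(b,d)) ensures $\partial_t g_n(\lambda,{\bf y})>0$ at the positive solution, so $\lambda$ is $C^1$ in ${\bf y}$, hence so is ${\bf T}({\bf y})=\lambda({\bf y}){\bf y}$; the argument for ${\bf S}$ is identical.

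The computational heart is part (c). Mirroring the derivation of \eqref{eq:detTprime}, I differentiate ${\bf T}({\bf y})=\lambda({\bf y}){\bf y}$ to obtain ${\bf T}'({\bf y})=\lambda {\bf I}+{\bf y}\cdot(\nabla\lambda)^T$, and apply the Matrix Determinant Lemma to get $\det{\bf T}'({\bf y})=\lambda^{d(n-1)-1}(\lambda+\langle{\bf y},\nabla\lambda\rangle)$. To evaluate $\langle{\bf y},\nabla\lambda\rangle$, I implicitly differentiate $g_n(\lambda({\bf y}),{\bf y})=h_n(1,{\bf y})$ and pair with ${\bf y}$. The crucial simplification is the Euler-type identity $\langle{\bf y},\nabla_{\bf y} g_n(t,{\bf y})\rangle=t\,\partial_t g_n(t,{\bf y})$, and the analogous one for $h_n$, both of which hold because $g_n, h_n$ depend on ${\bf y}$ only through the scalar multiple $t{\bf y}$ (and are easily verified from \eqref{eq:DefGn}--\eqref{eq:DefHn} by direct computation of each side). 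This collapses the pairing to $\partial_t g_n(\lambda,{\bf y})\bigl(\lambda+\langle{\bf y},\nabla\lambda\rangle\bigr)=\partial_t h_n(1,{\bf y})$, and substituting the explicit expressions for $\partial_t g_n$ and $\partial_t h_n$ (which involve the differences of gradients appearing in the statement) yields the claimed formula for $\det{\bf T}'({\bf y})$. The formula for $\det{\bf S}'({\bf y})$ follows by interchanging the roles of $\psi$ and $\varphi$ throughout.

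For part (d), if ${\bf y}\in \mathcal{E}_\varphi(\boldsymbol{\xi},c)$ with $c>0$, then by definition of $\lambda$ and the homogeneity relation $g_n(1,\lambda{\bf y})=g_n(\lambda,{\bf y})$,
\[ g_n(1,{\bf T}({\bf y}))=g_n(\lambda,{\bf y})=h_n(1,{\bf y})=c, \]
so ${\bf T}({\bf y})\in\mathcal{E}_\psi(\boldsymbol{\xi},c)$; the symmetric argument handles ${\bf S}$, and bijectivity onto the respective ellipsoids then follows from (a).

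The main obstacle will be the bookkeeping in part (c)---in particular, verifying the Euler-type identity and correctly routing the implicit differentiation so that the ratio of gradient-differences emerges cleanly. Parts (a), (b) and (d) should follow from Lemma \ref{lem:n-convexity} and definition chasing with minimal further work.
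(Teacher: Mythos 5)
Your proposal is correct and follows essentially the same route as the paper: part (a) via Lemma \ref{lem:n-convexity}(h), part (b) via the Implicit Function Theorem with the nonvanishing of $\partial_t g_n$ for $t>0$ supplied by strict convexity, part (c) via differentiating ${\bf T}={\lambda}\,{\bf y}$ and the Matrix Determinant Lemma, and part (d) from the defining identities together with (a). The only noticeable variation is in the bookkeeping of part (c): the paper first solves the implicit-differentiation relation $({\lambda}{\bf I}+\nabla{\lambda}\cdot{\bf y}^T)\cdot{\bf u}={\bf v}$ for the full vector $\nabla{\lambda}$ and then contracts against ${\bf y}$, whereas you contract against ${\bf y}$ immediately and invoke the Euler-type identity $\langle{\bf y},\nabla_{\bf y}g_n(t,{\bf y})\rangle=t\,\partial_t g_n(t,{\bf y})$ (valid since $g_n(t,{\bf y})$ depends on $(t,{\bf y})$ only through $t{\bf y}$) to collapse the remaining terms. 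This is algebraically equivalent to the paper's manipulation--indeed the paper records the same two quantities $\langle{\bf v},{\bf y}\rangle=h_n'(1)$ and $\langle{\bf u},{\bf y}\rangle=g_n'({\lambda})$ in \eqref{eq:equivalent-T'S'}--but your route bypasses the explicit formula for $\nabla{\lambda}$ entirely and is arguably cleaner.
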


\begin{proof}
Part (a) is a restatement of part (h) in Lemma \ref{lem:n-convexity}. Indeed, if ${\bf y}\neq{\bf 0}$, then
	$\rho(\la({\bf y}){\bf y})\la({\bf y})=1$, and this implies
		\[{\bf S}({\bf T}({\bf y}))=\rho({\bf T}({\bf y})){\bf T}({\bf y})=\rho(\la({\bf y}){\bf y})\la({\bf y}){\bf y}={\bf y}.\]
	A similar argument  shows that ${\bf T}({\bf S}({\bf y}))={\bf y}$, for every nonzero vector ${\bf y}\in\R^{d(n-1)}$.
	
	 Part (b) follows from the Implicit Function Theorem, after verifying that the 
	derivative of the map 
	$t\mapsto g_n(t,{\bf y})$
	is nonzero for each ${\bf y}\in\R^{d(n-1)}\setminus\{0\}$, provided $t>0$. 
	This derivative equals 
	\[g_n'(t,{\bf y})=\sum_{i=1}^{n-1}\Bigl\langle\nabla\psi\Big(\boldsymbol{\xi}/n+t\sum_{j=1}^{n-1}{\bf y}_j\Big)- \nabla\psi 
	(\boldsymbol{\xi}/n-t{\bf y}_i), {\bf y}_i\Bigr\rangle,\]
	which is nonzero if ${\bf y}\neq {\bf 0}$ because of the strict convexity of $\psi$. 
	
	 To verify (c), we compute the Jacobian matrix of the map ${\bf T}$ analogously to what was  
	done in the proof of Proposition \ref{prop:ConvolutionProperties}. 
	Implicit differentiation of  identity \eqref{eq:DefLambdaViaHG} with respect 
	to ${\bf y}$ yields 
	\[(\la {\bf I}+\nabla \la\cdot  {\bf y}^T)\cdot {\bf u}={\bf v},\]
	where the components of the vectors ${\bf u}=({\bf u}_1,\dotsc,{\bf u}_{n-1}),\, 
	{\bf v}=({\bf v}_1,\dotsc,{\bf v}_{n-1})$ equal
	\begin{align*}
	{\bf u}_i=\nabla\psi\Big(\boldsymbol{\xi}/n+\la\sum_{j=1}^{n-1}{\bf y}_j\Big)-\nabla\psi(\boldsymbol{\xi}/n-\la {\bf y}_i),
	\text{ and }
	{\bf v}_i=\nabla\vphi\Big(\boldsymbol{\xi}/n+\sum_{j=1}^{n-1}{\bf y}_j\Big)-\nabla\vphi(\boldsymbol{\xi}/n-{\bf y}_i),
	\end{align*}
	for $1\leq i<n$.
	It follows that
	$\nabla \la=\frac{{\bf v}-\la {\bf u}}{\langle {\bf u},{\bf y}\rangle},$
		with strictly positive denominator $\langle {\bf u}, {\bf y}\rangle$ if ${\bf y}\neq {\bf 0}$.
	In a similar way, we find
	$\nabla\rho=\frac{{\bf b}-\rho {\bf a}}{\langle {\bf a},{\bf y}\rangle}$,
	where the components of the vectors ${\bf a}=( {\bf a}_1,\dotsc,{\bf a}_{n-1}),\,{\bf b}=({\bf b}_1,\dotsc,{\bf b}_{n-1})$ equal
		\begin{align*}
	{\bf a}_i=\nabla\vphi\Big(\boldsymbol{\xi}/n+\rho\sum_{j=1}^{n-1}{\bf y}_j\Big)-\nabla\vphi(\boldsymbol{\xi}/n-\rho {\bf y}_i),
	\text{ and }
	{\bf b}_i=\nabla\psi\Big(\boldsymbol{\xi}/n+\sum_{j=1}^{n-1}{\bf y}_j\Big)-\nabla\psi(\boldsymbol{\xi}/n-{\bf y}_i).
	\end{align*}
	Using the above expressions for $\nabla\la$ and $\nabla\rho$ together with the Matrix Determinant 
	Lemma, 
	\begin{align*}
	\det {\bf T}'({\bf y})&=\det(\la {\bf I}+\nabla\la\cdot {\bf y}^T)
	=\det(\lambda {\bf I})(1+\la^{-1} \langle {\bf y}, 
	\nabla\la\rangle)\\
	&=\la^{d(n-1)}(1+\la^{-1}\langle\nabla\la,{\bf y}\rangle)
	=\la^{d(n-1)-1}\frac{\langle {\bf v},{\bf y}\rangle}{\langle {\bf u},{\bf y}\rangle},
	\end{align*}
	and similarly
	\[\det {\bf S}'({\bf y})
	=\rho^{d(n-1)}(1+\rho^{-1}\langle\nabla\rho,{\bf y}\rangle)
	=\rho^{d(n-1)-1}\frac{\langle {\bf b},{\bf y}\rangle}{\langle {\bf a},{\bf y}\rangle}.\]
	
	 We finally turn to part (d). That the maps ${\bf T}$ and ${\bf S}$ have the desired mapping 
	properties  follows from the defining identities
	\eqref{eq:DefLambdaViaHG} and \eqref{eq:DefRhoViaHG}, respectively.
	In view of  (a), the restriction of ${\bf T}$ (resp. ${\bf S}$) to the set $\mathcal E_\varphi$ (resp. $\mathcal E_\psi$) is a bijective map. 
\end{proof}

We can rewrite the Jacobian determinants of ${\bf T}$ and ${\bf S}$ as
\begin{equation}\label{eq:equivalent-T'S'}
\det {\bf T}'({\bf y})=\la({\bf y})^{d(n-1)-1}\frac{h_n'(1)}{g_n'(\la({\bf y}))},\quad 
\det {\bf S}'({\bf y})=\rho({\bf y})^{d(n-1)-1}\frac{g_n'(1)}{h_n'(\rho({\bf y}))}. 
\end{equation}
Here we are using the facts that
$h_n'(1)=\langle 
{\bf v},{\bf y}\rangle$, $g_n'(\la)=\langle {\bf u},{\bf y}\rangle$, $h_n'(\rho)=\langle {\bf a},{\bf y}\rangle$ and $g_n'(1)=\langle 
{\bf b},{\bf y}\rangle$, where ${\bf u},{\bf v},{\bf a},{\bf b}$ are the vectors introduced in the course of the proof of Lemma \ref{lem:transformation-general}.
Specializing to the case $\vphi=|\cdot|^2$, the expression defining $h_n$ 
simplifies to
\begin{align*}
h_n(t)
=t^2\Bigl(\abs{\sum_{j=1}^{n-1}{\bf y}_j}^2+\sum_{j=1}^{n-1}\ab{{\bf y}_j}^2\Bigr)
\end{align*}
and so  $h_n(t)=t^2 h_n(1)$ and $h_n'(t)=2th_n(1)$. 
In particular, $\la h_n'(1)=h_n'(\la)$, and \eqref{eq:equivalent-T'S'} becomes
\begin{equation}\label{quadratic-T'S'}
\det {\bf T}'({\bf y})=\la({\bf y})^{d(n-1)-2}\frac{h_n'(\la({\bf y}))}{g_n'(\la({\bf y}))},\quad \det 
{\bf S}'({\bf y})=\rho({\bf y})^{d(n-1)-2}\frac{g_n'(1)}{h_n'(1)}.
\end{equation}
Note also  that $\rho({\bf y})$, which solves 
$h_n(\rho)=g_n(1)$, is given by
\begin{equation}\label{quadratic-rho}
 \rho({\bf y})=\biggl(\frac{g_n(1)}{h_n(1)}\biggr)^{\frac12}.
\end{equation}
We are now ready for our next lemma.

 \begin{lemma}\label{lem:n-contractive}
 	Let $d\geq 1$ and $n\geq 2$.
 	Let $\vphi=|\cdot|^2$ and $\psi=|\cdot|^2+\phi$, where $\phi\geq0$ is a strictly convex  
 	$C^1(\R^d)$ function.
 	Let $\boldsymbol{\xi}\in \R^d$ be given, and consider the map ${\bf T}$  given by 
 	\eqref{eq:n-T}.
 	Then 
 	\begin{equation}\label{n-<1}
 	|\det {\bf T}'({\bf y})|< 1, \textrm{ for every }{\bf y}\neq {\bf 0}.
 	\end{equation}
 \end{lemma}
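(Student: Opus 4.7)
The approach is to start from the explicit formula
\[ \det {\bf T}'({\bf y}) = \lambda({\bf y})^{d(n-1)-2}\,\frac{h_n'(\lambda({\bf y}))}{g_n'(\lambda({\bf y}))} \]
supplied by \eqref{quadratic-T'S'}, and to exploit the decomposition $\psi = \varphi + \phi$. This induces $g_n = h_n + k_n$, where $k_n$ is defined in exact analogy to $g_n$ and $h_n$ but with $\phi$ in place of $\psi$. Since $\phi \geq 0$ is strictly convex and $C^1$, an argument entirely parallel to Lemma \ref{lem:n-convexity} shows that for each fixed ${\bf y}\neq{\bf 0}$ the function $t\mapsto k_n(t,{\bf y})$ is strictly convex on $\R$ with $k_n(0,{\bf y}) = k_n'(0,{\bf y}) = 0$, so that $k_n(t,{\bf y}) > 0$ and $k_n'(t,{\bf y}) > 0$ for every $t > 0$.

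A preliminary step is to pin down that $\lambda = \lambda({\bf y}) \in (0,1)$ strictly. The lower bound follows from Lemma \ref{lem:n-convexity}(g), since the strict convexity of $\varphi = |\cdot|^2$ forces $h_n(1,{\bf y}) > 0$ whenever ${\bf y}\neq {\bf 0}$. For the upper bound I would use $k_n(1,{\bf y}) > 0$ to infer $g_n(1,{\bf y}) > h_n(1,{\bf y}) = g_n(\lambda,{\bf y})$, and then invoke the strict monotonicity of $g_n(\cdot,{\bf y})$ on $[0,\infty)$ to conclude $\lambda < 1$. Rewriting the determinant as
\[ \det {\bf T}'({\bf y}) = \lambda^{d(n-1)-2}\cdot\frac{h_n'(\lambda)}{h_n'(\lambda)+k_n'(\lambda)}, \]
the second factor is strictly smaller than $1$ because $k_n'(\lambda) > 0$. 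For any $(d,n)$ with $d(n-1) \geq 2$, the prefactor $\lambda^{d(n-1)-2} \leq 1$ immediately yields the claim.

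The main obstacle is the case $(d,n)=(1,2)$, where $d(n-1)-2 = -1$ makes the prefactor $\lambda^{-1}$ exceed $1$. Here I would use the identity $h_n'(\lambda) = 2\lambda h_n(1)$ (from $h_n(t) = t^2 h_n(1)$) to collapse the determinant to $2 h_n(1)/g_n'(\lambda)$, reducing the claim to
\[ k_n'(\lambda) > 2(1-\lambda)\,h_n(1). \]
The defining identity $g_n(\lambda) = h_n(1)$ rewrites as $k_n(\lambda) = (1-\lambda^2)\,h_n(1)$, which turns the target into the cleaner assertion
\[ (1+\lambda)\,k_n'(\lambda) > 2\,k_n(\lambda). \]
This follows from the tangent inequality $\lambda k_n'(\lambda) > k_n(\lambda)$ (strict because $k_n$ is strictly convex and $k_n(0) = 0$), multiplied through by $(1+\lambda)/\lambda \geq 2$, where the latter bound uses $\lambda \leq 1$.
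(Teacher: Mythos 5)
Your argument is correct. For $d(n-1)\ge 2$ it coincides with the paper's proof. For the exceptional case $(d,n)=(1,2)$ the route is genuinely different, though morally cognate: the paper passes to the inverse map and shows $\det{\bf S}'(y)>1$ by algebra at $t=1$, ultimately reducing to the tangent-line inequality $H_n(1)\le H_n'(1)$ for the strictly convex perturbation ($H_n$ is your $k_n$); you instead stay with ${\bf T}$, substitute $h_n'(\lambda)=2\lambda h_n(1)$ directly into \eqref{eq:n-hmgDetT}, and use the defining identity $g_n(\lambda)=h_n(1)$ to eliminate $h_n(1)$ in favor of $k_n(\lambda)$, arriving at $(1+\lambda)k_n'(\lambda)>2k_n(\lambda)$, which you close with the tangent-line inequality $\lambda k_n'(\lambda)>k_n(\lambda)$ at $t=\lambda$ and the bound $(1+\lambda)/\lambda\ge 2$. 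Your version avoids invoking ${\bf S}$ and $\rho$ at all — so it is slightly more self-contained and could in principle be stated without first proving ${\bf T}$ and ${\bf S}$ are inverses — while the paper's version keeps the computation at the fixed point $t=1$ and therefore has somewhat cleaner bookkeeping. Both are valid; the essential content (the tangent-line inequality for the strictly convex difference $g_n-h_n$) is the same.
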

 
  \begin{proof}
 	Fix ${\bf y}\neq {\bf 0}$.
 	For the particular choices of $\psi,\vphi$ as in the statement of the lemma, define functions $g_n$ and $h_n$ via  \eqref{eq:DefGn} and \eqref{eq:DefHn}, respectively.
 	Recall the first identity in \eqref{quadratic-T'S'}:
 	\begin{equation}\label{eq:n-hmgDetT}
 	\det 
 	{\bf T}'({\bf y})=\lambda({\bf y})^{d(n-1)-2}\frac{h_n'(\lambda({\bf y}))}{g_n'(\lambda({\bf y}))}.
 	\end{equation}
 	We have already argued that $g_n-h_n$ is a nonnegative,  differentiable, strictly convex function 
 	satisfying $(g_n-h_n)(0)=0$ and $(g_n-h_n)'(0)=0$. It follows that $(g_n-h_n)'(t)> 0$ for every $t> 0$, 
 	and therefore the fraction on the right-hand side of \eqref{eq:n-hmgDetT} is strictly 
 	less than 1 provided that $\lambda({\bf y})>0$. That this is indeed the case follows from part (g) of 
 	Lemma \ref{lem:n-convexity}, since $h_n(1)>0$. Thus $\ab{\det {\bf T}'({\bf y})}<\la({\bf y})^{d(n-1)-2}$. 
 	The exponent $d(n-1)-2$ is nonnegative as long as $d\geq 2$ and $n\geq 2$, or $d=1$ and $n\geq 
 	3$.
 	For such pairs $(d,n)$, the proof is finished by noting that $\la({\bf y})\leq 
 	1$.
	 	
 	To handle the remaining case $(d,n)=(1,2)$, start by noting that
 	$$\det {\bf S}'({\bf T}(y))\det {\bf T}'(y)=\det {\bf T}'({\bf S}(y))\det {\bf S}'(y)=1,\text{ for every }y\in\R\setminus\{0\},$$
	since ${\bf T}$ and ${\bf S}$ are inverse maps. 
 	Therefore, it suffices to show that  $\det {\bf S}'(y)>1$,  for 
 	every $y\neq 0$. From \eqref{quadratic-T'S'} and \eqref{quadratic-rho},
  	we obtain 
 	\[ \det {\bf S}'(y)=\Bigl(\frac{g_n(1)}{h_n(1)}\Bigr)^{-\frac{1}{2}}\frac{g_n'(1)}{h_n'(1)}, \]
 	so we are left with checking that
 	\[ \frac{g_n(1)}{h_n(1)}<\biggl(\frac{g_n'(1)}{h_n'(1)}\biggr)^2.\]
 	In the present case, $h_n'(1)=2h_n(1)$, and so this can be rewritten as
 	\[ 4h_n(1)g_n(1)< (g_n'(1))^2. \]
 	Since $\phi$ is strictly convex, we can 
 	write $g_n(t)=h_n(t)+H_n(t)$, where 
 	$H_n$ is strictly convex and $H_n(0)=0$. 
	Thus it suffices to check that
 	\[ 4h_n(1)(h_n(1)+H_n(1))< (2h_n(1)+H_n'(1))^2, \]
 	or equivalently
 	\[ 4h_n(1)H_n(1)< 4h_n(1)H_n'(1)+(H_n'(1))^2. \]
 	This last inequality holds if $H_n(1)\leq H_n'(1)$. 
	But this is immediate since 
 	$H_n(0)=0$ and $H_n$ is convex. In particular,  $H_n'(1)\geq H_n'(t)$, for every 
 	$t\in[0,1]$, and so
 	\[ H_n(1)=H_n(0)+\int_0^1 H_n'(t)\d t\leq H_n'(1). \]
	This completes the proof.
 \end{proof}
 
 \noindent With the right tools at our disposal, the proof of Theorem \ref{thm:Thm7} now follows similar lines to that of \cite[Theorem 1.3]{OSQ16}.
 We provide the details for the convenience of the reader.

\begin{proof}[Proof of  Theorem \ref{thm:Thm7}]
	As in the proof of Proposition \ref{prop:ConvolutionProperties}, we may write
	\begin{align}
	\nu^{\ast(n)}(\boldsymbol{\xi},\tau)&=\int_{\R^{d(n-1)}}\ddirac{\tau-n\psi(\boldsymbol{\xi}/n)-g_n(1,{\bf y})}\d 
	{\bf y},\label{n-PsiConvolution}\\
	\nu_0^{\ast(n)}(\boldsymbol{\xi},\tau)&=\int_{\R^{d(n-1)}}  
	\ddirac{\tau-n\vphi(\boldsymbol{\xi}/n)-h_n(1,{\bf y})}\d {\bf y}.\notag
	\end{align}
	By \eqref{eq:SupportConvolution}, the support of  
	$\nu^{\ast(n)}$ is contained in the support of 
	$\nu_0^{\ast(n)}$.
	 For each $\boldsymbol{\xi}\in\R^d$,  consider the map ${\bf T}$ given by 
	\eqref{eq:n-T}, which by  Lemma \ref{lem:transformation-general} maps each ellipsoid 
	$\mathcal E_\vphi (\boldsymbol{\xi},c)$
	bijectively onto $\mathcal E_\psi (\boldsymbol{\xi},c)$,  for every $c>0$.
	Changing variables ${\bf y}\rightsquigarrow {\bf T}({\bf y})$ in \eqref{n-PsiConvolution}, and appealing to the defining identity \eqref{eq:DefLambdaViaHG}, 
	\begin{align}
	\nu^{\ast(n)}(\boldsymbol{\xi}, \tau)
	&=\int_{\R^{d(n-1)}}\ddirac{\tau-n\psi(\boldsymbol{\xi}/n)-g_n(1,{\bf T}({\bf y}))}|\det {\bf T}'({\bf y})|\d {\bf y}\notag\\
	&=\int_{\R^{d(n-1)}}\ddirac{\tau-n\psi(\boldsymbol{\xi}/n)-g_n(\la({\bf y}),{\bf y})}|\det {\bf T}'({\bf y})|\d {\bf y}\notag\\
	&=\int_{\R^{d(n-1)}}\ddirac{\tau-n\psi(\boldsymbol{\xi}/n)-h_n(1,{\bf y})}|\det {\bf T}'({\bf y})|\d {\bf y}\notag\\
	&=\int_{\R^{d(n-1)}}\ddirac{(\tau-n\phi(\boldsymbol{\xi}/n))-n\vphi(\boldsymbol{\xi}/n)-h_n(1,{\bf y})}|\det {\bf T}'({\bf y})|\d
	{\bf y}.\label{n-beforestrict}
	\end{align}
	From Lemma \ref{lem:n-contractive}, we know that  $|\det {\bf T}'|\leq 1$, and so H\"older's inequality 
	implies
	\[\nu^{\ast(n)}(\boldsymbol{\xi},\tau)\leq
	\nu_0^{\ast(n)}(\boldsymbol{\xi},\tau-n\phi(\boldsymbol{\xi}/n)),\]
	for every $\boldsymbol{\xi}\in\R^d$ and $\tau>n\psi(\boldsymbol{\xi}/n)$. Thus inequality \eqref{eq:GeometricComparisonMainIneq} holds.
	We now appeal to \eqref{n-<1} to argue that this inequality must be strict at every 
	point in the interior of the support of $\nu^{\ast(n)}$. Let  $(\boldsymbol{\xi},\tau)$ be one such 
	point, for which $c:=\tau-n\psi(\boldsymbol{\xi}/n)>0$. 
	The singular measure that is being integrated in \eqref{n-beforestrict} is supported on the ellipsoid 
	$\mathcal E_\vphi(\boldsymbol{\xi},c)$. Since $c>0$, this ellipsoid does not contain the origin, and by 
	Lemma \ref{lem:n-contractive}  the strict inequality $|\det {\bf T}'({\bf y})|<1$ holds at every point ${\bf y}\in 
	\mathcal E_\vphi(\boldsymbol{\xi},c)$. This can be strengthened to $|\det {\bf T}'({\bf y})|\leq c_0$ for some fixed 
	$c_0<1$ (which depends on $\phi,\boldsymbol{\xi},\tau$ but not on ${\bf y}$), since the set $\mathcal 
	E_\vphi(\boldsymbol{\xi},c)$ is compact and the function ${\bf y}\mapsto\det {\bf T}'({\bf y})$ is continuous. The result now 
	follows from replacing the $\delta$-function appearing in the integral \eqref{n-beforestrict} 
	by 
	an appropriate $\varepsilon$-neighborhood of the ellipsoid $\mathcal E_\vphi(\boldsymbol{\xi},c)$, and then 
	analyzing 	the cases of equality in H\"older's inequality. To conclude the proof of the theorem, let 
	$\varepsilon\to 0^+$.
\end{proof}
 
\section{Convex perturbations of parabolas}\label{sec:Perturbations}
In this section, we deduce Theorem \ref{thm:Thm6} from Theorem \ref{thm:Thm7}.
Let $\phi:\R\to\R$ satisfy the conditions of Theorem \ref{thm:Thm6}, let $\nu$ denote projection measure on the curve $\Sigma_\phi\subset\R^2$ defined in \eqref{eq:DefSigmaPhi}, and set $\psi:=|\cdot|^2+\phi$.
The following result is a direct analogue of \cite[Lemmata 4.1 and 4.2]{OSQ16}, and can be proved in the same way.

\begin{lemma}
 \label{lem:upper-bound-concentration}
 Given $y_0\in\R$,  let $\{f_n\}\subset L^2(\R)$ be a sequence concentrating at $y_0$. 
 Then 
  \begin{equation}
  \label{value_boundary}
  \limsup_{n\to\infty}\frac{\norma{f_n\nu\ast f_n\nu\ast f_n\nu}_{L^2(\R^{2})}^2}{\norma{f_n}_{L^2(\R)}^6}\leq 
  (\nu\ast\nu\ast\nu)(3y_0,3\psi(y_0)).
 \end{equation}
 If we set  $f_n(y)=e^{-n(\psi(y)-\psi(y_0)-\psi'(y_0)(y-y_0))}$, then  the sequence 
$\{f_n\norma{f_n}^{-1}_{L^2}\}\subset L^2(\R)$ concentrates at $y_0$, and
equality holds in \eqref{value_boundary}.
\end{lemma}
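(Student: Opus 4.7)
The plan is to reduce the estimate to a pointwise boundary-value computation for $\nu\ast\nu\ast\nu$ at the point $(3y_0, 3\psi(y_0))$, via a symmetric Cauchy--Schwarz step combined with the continuity provided by Proposition \ref{prop:ConvolutionProperties}. First, using the delta calculus I would expand
\[
\|f_n\nu\ast f_n\nu\ast f_n\nu\|_{L^2(\R^2)}^2 = \int_{\R^6} \prod_{i=1}^6 f_n(y_i)\,\delta\!\left(\sum_{i=1}^3 y_i - \sum_{i=4}^6 y_i\right)\,\delta\!\left(\sum_{i=1}^3 \psi(y_i) - \sum_{i=4}^6 \psi(y_i)\right)\,dy_1\cdots dy_6.
\]
Writing this as an integral of the product $A\cdot B$ with $A:=\prod_{i=1}^3 f_n(y_i)$ and $B:=\prod_{i=4}^6 f_n(y_i)$ against the singular measure in the above display, the Cauchy--Schwarz inequality together with the symmetry of the constraint measure under $(y_1,y_2,y_3)\leftrightarrow (y_4,y_5,y_6)$ yields
\[
\|f_n\nu\ast f_n\nu\ast f_n\nu\|_{L^2}^2 \leq \int_{\R^3} f_n(y_1)^2 f_n(y_2)^2 f_n(y_3)^2\, K(y_1,y_2,y_3)\,dy_1 dy_2 dy_3,
\]
where $K(y_1,y_2,y_3) := (\nu\ast\nu\ast\nu)(y_1+y_2+y_3,\,\psi(y_1)+\psi(y_2)+\psi(y_3))$. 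By Jensen's inequality applied to the convex function $\psi$, the argument of $\nu\ast\nu\ast\nu$ lies in the support described by \eqref{eq:SupportConvolution}, and reaches the boundary exactly on the diagonal $y_1=y_2=y_3$.

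Since $\psi''=2+\phi''>0$, Proposition \ref{prop:ConvolutionProperties}(d) in the case $(d,n)=(1,3)$ ensures that $K$ extends continuously up to the diagonal, with boundary value $K(y,y,y)=(\nu\ast\nu\ast\nu)(3y,3\psi(y))=2\pi/(\sqrt{3}\,\psi''(y))$ by \eqref{eq:Formula3foldConvolution}. Dividing the previous display by $\|f_n\|_{L^2}^6$ and interpreting the right-hand side as the integral of $K$ against the probability measure $\mu_n^{\otimes 3}$, where $d\mu_n:=|f_n|^2\|f_n\|_{L^2}^{-2}\,dy$, the hypothesis that $\{f_n\|f_n\|_{L^2}^{-1}\}$ concentrates at $y_0$ translates into $\mu_n^{\otimes 3}\weak \delta_{(y_0,y_0,y_0)}$. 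Continuity and local boundedness of $K$ near this point then yield $\limsup_n \|f_n\nu\ast f_n\nu\ast f_n\nu\|_{L^2}^2/\|f_n\|_{L^2}^6 \leq K(y_0,y_0,y_0)$, which is \eqref{value_boundary}.

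For the equality statement, set $\Phi(y):=\psi(y)-\psi(y_0)-\psi'(y_0)(y-y_0)$, which is nonnegative and vanishes uniquely at $y_0$ by strict convexity of $\psi$; hence the normalized sequence $\{f_n\|f_n\|_{L^2}^{-1}\}$ with $f_n=e^{-n\Phi}$ concentrates at $y_0$. The crucial algebraic observation is that on the constraint set $\{\sum_{i=1}^3 y_i = \sum_{i=4}^6 y_i,\ \sum_{i=1}^3 \psi(y_i) = \sum_{i=4}^6 \psi(y_i)\}$, the identity $\sum_{i=1}^3 \Phi(y_i) = \sum_{i=4}^6 \Phi(y_i)$ holds identically, since the subtracted linear and constant parts depend only on the conserved sums. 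Consequently $A=B$ on the support of the singular measure, so the Cauchy--Schwarz step is sharp; the previous argument then runs with equalities throughout, producing the matching lower bound in the limit and establishing equality in \eqref{value_boundary}.

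The main technical obstacle is the passage to the limit $\int K\,d\mu_n^{\otimes 3}\to K(y_0,y_0,y_0)$, since $(y_0,y_0,y_0)$ corresponds to a boundary point of the support of $\nu\ast\nu\ast\nu$; Proposition \ref{prop:ConvolutionProperties}(d), which delivers the continuous extension of the convolution to this boundary, is exactly the tool that unblocks this step.
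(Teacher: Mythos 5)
Your argument is correct and is essentially the proof that the paper points to, namely the $3$-fold analogue of \cite[Lemmata 4.1 and 4.2]{OSQ16}: the symmetric Cauchy--Schwarz step reduces the ratio to $\int K\,\d\mu_n^{\otimes 3}$ with $K(y_1,y_2,y_3)=(\nu\ast\nu\ast\nu)(y_1+y_2+y_3,\psi(y_1)+\psi(y_2)+\psi(y_3))$, concentration sends this to the boundary value $K(y_0,y_0,y_0)$ furnished by Proposition \ref{prop:ConvolutionProperties}(d), and the affine-subtraction identity $\sum_{i=1}^3\Phi(y_i)=\sum_{i=4}^6\Phi(y_i)$ on the constraint variety makes Cauchy--Schwarz sharp for $f_n=e^{-n\Phi}$. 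One small imprecision worth flagging: \emph{local} boundedness of $K$ near the concentration point is not by itself sufficient for $\int K\,\d\mu_n^{\otimes 3}\to K(y_0,y_0,y_0)$, since a vanishing fraction of mass may interact with large values of $K$ far away; you should invoke the \emph{global} bound $K\leq\norma{\nu\ast\nu\ast\nu}_{L^\infty}\leq\pi/\sqrt{3}$, which the paper obtains from $\psi''=2+\phi''\geq 2$ together with formula \eqref{eq:ConvolutionFormula} (or from Theorem \ref{thm:Thm7}), and only then split into a small ball around $(y_0,y_0,y_0)$ and its complement. With that one line added, the proof is complete.
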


\begin{proof}[Proof of Theorem \ref{thm:Thm6}]
Denote the optimal constant in inequality \eqref{eq:PertParabolaConvolutionForm} by
$${\bf P}_\phi:=\sup_{0\neq f\in L^2}\frac{\|f\nu\ast f\nu\ast f\nu\|^{1/3}_{L^2(\R^2)}}{ \|f\|_{L^2(\R)}}.$$
We first show that ${\bf P}^6_\phi={\pi}/{\sqrt{3}}$.
The Cauchy--Schwarz inequality implies 
\begin{equation}\label{eq:firstCS}
\norma{f\nu\ast f\nu\ast f\nu}^2_{L^2(\R^{2})}\leq 
\norma{\nu\ast\nu\ast\nu}_{L^\infty(\R^2)}\norma{f}_{L^2(\R)}^6.
\end{equation}
On the other hand, the convolution  $\nu\ast\nu\ast\nu$ defines a bounded function on $\R^2$, as can be seen from  identity
 \eqref{eq:ConvolutionFormula}: one just applies the integral version of the 
Mean Value Theorem, after noting that $\psi''=2+\phi''$ and $\phi''\geq 0$. 
As a consequence,
${\bf P}_\phi^6\leq \norma{\nu\ast\nu\ast\nu}_{L^\infty}.$
Now, let $\nu_0$ denote the projection measure on the parabola $\Sigma_0$.
 From Theorem \ref{thm:Thm7} and Remark \ref{rem:tripleconvolution}, we know that 
 $$\norma{\nu\ast\nu\ast\nu}_{L^\infty(\R^2)}\leq\norma{\nu_0\ast\nu_0\ast\nu_0}_{L^\infty(\R^2)}=\tfrac{\pi}{\sqrt{3}}.$$ 
 Therefore ${\bf P}_\phi^6\leq {\pi}/{\sqrt{3}}$.
In order to show that
 ${\bf P}_\phi^6\geq {\pi}/{\sqrt{3}}$,
 we use the sequence given by \eqref{eq:ExtSeqPertParabola}. 
 In case (i), since $\phi''(y_0)=0$, it follows from \eqref{eq:Formula3foldConvolution} 
 that 
 $$(\nu\ast\nu\ast\nu)(3y_0,3\psi(y_0))=\tfrac{\pi}{\sqrt{3}}.$$
 Thus the sequence $\{f_n\|f_n\|^{-1}_{L^2}\}$, where
 $$f_n(y)=e^{-n(\psi(y)-\psi(y_0)-\psi'(y_0)(y-y_0))},$$
 is extremizing for \eqref{eq:PertParabolaConvolutionForm} in light of Lemma \ref{lem:upper-bound-concentration}. 
 In case (ii), we have  
$$(\nu\ast\nu\ast\nu)(3y_n,3\psi(y_n))\to\tfrac{\pi}{\sqrt{3}}, \text{ as } n\to\infty.$$
Choose a sequence $\{a_n\}\subset\N$ in such a way that the function 
$$f_n(y)=e^{-a_n(\psi(y)-\psi(y_n)-\psi'(y_n)(y-y_n))}$$
satisfies
 \[\left\vert{\frac{\norma{f_{n}\nu\ast f_n\nu\ast 
f_{n}\nu}_{L^2(\R^2)}^2}{\norma{f_{n}}_{L^2(\R)}^6}-(\nu\ast\nu\ast\nu)(3y_n,3\psi(y_n))}
\right\vert\leq\frac{1}{n},\]
\begin{equation}\label{conctrinfty}
\int_{|y-y_n|\geq \frac1n} |f_{n}(y)|^2 \d y \leq \frac{1}{n} \|f_{n}\|_{L^2(\R)}^2,
\end{equation}
for every $n\in\N$.
That this is possible follows again from Lemma \ref{lem:upper-bound-concentration}. Since 
$$\frac{\norma{f_{n}\nu\ast f_n\nu\ast
f_{n}\nu}_{L^2(\R^2)}^2}{\norma{f_{n}}_{L^2(\R)}^6}\to\tfrac{\pi}{\sqrt{3}}, \textrm{ as } n\to\infty,$$
the sequence $\{f_n\|f_n\|^{-1}_{L^2}\}$ is again extremizing for  \eqref{eq:PertParabolaConvolutionForm}.  
It follows that ${\bf P}_\phi^6= {\pi}/{\sqrt{3}}$.
 
 We finish by showing that extremizers for \eqref{eq:PertParabolaConvolutionForm} do not exist. Aiming at a contradiction, let $f\geq 0$ be an extremizer. By an application of Cauchy--Schwarz and H\"older's inequalities, 
\begin{align*}
{{\bf P}_\phi^6} \norma{f}_{L^2(\R)}^6
&=\norma{f\nu\ast f\nu\ast f\nu}_{L^2(\R^2)}^2\\
&\leq \int_{\R^2} |(f^2\nu\ast f^2\nu\ast f^2\nu)(\xi,\tau)|(\nu\ast\nu\ast \nu)(\xi,\tau)\d\xi \d\tau\\
&\leq \|\nu\ast\nu\ast\nu\|_{L^\infty(\R^2)}\int_{\R^2} |(f^2\nu\ast f^2\nu\ast f^2\nu)(\xi,\tau)|\d\xi \d\tau\\
&= \|\nu\ast\nu\ast\nu\|_{L^\infty(\R^2)} \|f\|_{L^2(\R)}^6.
 \end{align*}
 Since ${\bf P}_\phi^6=\|\nu\ast\nu\ast\nu\|_{L^\infty}={\pi}/{\sqrt{3}}$ and 
 $f\neq 0$, all inequalities in this chain of inequalities must be equalities. In 
particular, the convolution $\nu\ast\nu\ast\nu$ must be constant equal to 
$\|\nu\ast\nu\ast\nu\|_{L^\infty}$ almost everywhere inside the support of 
 $f^2\nu\ast f^2\nu\ast f^2\nu$, which is a set of positive Lebesgue measure since $f\neq 
 0$. This contradicts the strict inequality 
 \begin{equation*}
(\nu\ast\nu\ast\nu)(\xi, \tau)<\norma{\nu\ast\nu\ast\nu}_{L^\infty(\R^2)},\textrm{ for 
almost every }(\xi, \tau)\in\textrm{supp} (\nu\ast\nu\ast\nu),
\end{equation*}
 which in turn follows from the second part of Theorem \ref{thm:Thm7}. This contradiction shows that extremizers do not exist. The proof of the theorem is now complete.
\end{proof}


\begin{thebibliography}{03}


\bibitem{BM97} 
\textsc{J.-G. Bak and D. McMichael},
\newblock {\it Convolution of a measure with itself and a restriction theorem.}
\newblock Proc. Amer. Math. Soc. {\bf 125} (1997), no.~2, 463--470. 

\bibitem{BBI15} 
\textsc{J. Bennett, N. Bez and M. Iliopoulou}, 
\newblock {\it Flow monotonicity and Strichartz inequalities.}
\newblock Int. Math. Res. Not. IMRN 2015, no.~19, 9415--9437. 

\bibitem{BOSQ18}
\textsc{G. Brocchi, D. Oliveira e Silva and R. Quilodr\'{a}n},
\newblock {\it Sharp Strichartz inequalities for fractional and higher order Schr\"odinger equations.}
\newblock Preprint, 2018. arXiv:1804.11291.

\bibitem{Ca09} 
\textsc{E. Carneiro}, 
\newblock {\it A sharp inequality for the Strichartz norm.}
\newblock Int. Math. Res. Not. IMRN 2009, no.~16, 3127--3145. 

\bibitem{CFOST17} 
\textsc{E. Carneiro, D. Foschi, D. Oliveira e Silva and C. Thiele},
\newblock {\it A sharp trilinear inequality related to Fourier restriction on the circle.}
\newblock Rev. Mat. Iberoam. {\bf 33} (2017), no.~4, 1463--1486. 

\bibitem{COS15} 
\textsc{E. Carneiro and D. Oliveira e Silva}, 
\newblock{\it Some sharp restriction inequalities on the sphere.} 
\newblock Int. Math. Res. Not. IMRN 2015, no.~17, 8233--8267. 

\bibitem{COSS17} 
\textsc{E. Carneiro, D. Oliveira e Silva and M. Sousa}, 
\newblock{\it Extremizers for Fourier restriction on hyperboloids.} 
\newblock Preprint, 2017. arXiv:1708.03826.

\bibitem{CS12a} 
\textsc{M. Christ and S. Shao}, 
\newblock{\it Existence of extremals for a Fourier restriction inequality.}
\newblock Anal. PDE {\bf 5} (2012), no.~2, 261--312. 

\bibitem{CS12b} 
\textsc{M. Christ and S. Shao}, 
\newblock{\it On the extremizers of an adjoint Fourier restriction inequality.}
\newblock Adv. Math. {\bf 230} (2012), no.~3, 957--977. 

\bibitem{Fe70} 
\textsc{C. Fefferman}, 
\newblock{\it Inequalities for strongly singular convolution operators.}
\newblock Acta Math. {\bf 124} (1970), 9--36. 

\bibitem{Fo07} 
\textsc{D. Foschi},
\newblock {\it Maximizers for the Strichartz inequality.} 
\newblock J. Eur. Math. Soc. (JEMS) {\bf 9} (2007), no.~4, 739--774. 

\bibitem{Fo15} 
\textsc{D. Foschi},
\newblock {\it Global maximizers for the sphere adjoint Fourier restriction inequality.} 
\newblock J. Funct. Anal. {\bf 268} (2015), no.~3, 690--702. 

\bibitem{FOS17} 
\textsc{D. Foschi and D. Oliveira e Silva},
\newblock {\it Some recent progress on sharp Fourier restriction theory.} 
\newblock Anal. Math. {\bf 43} (2017), no.~2, 241--265. 

 \bibitem{HZ06} 
\textsc{D. Hundertmark and V. Zharnitsky}, 
\newblock {\it On sharp Strichartz inequalities in low dimensions.} 
\newblock Int. Math. Res. Not. 2006, Art. ID 34080, 18 pp. 

\bibitem{OSQ16}
\textsc{D. Oliveira e Silva and R. Quilodr\'{a}n}, 
\newblock {\it On extremizers for Strichartz estimates for higher order Schr\"odinger equations.}
\newblock Preprint, 2016. arXiv:1606.02623. To appear in Trans. Amer. Math. Soc.

\bibitem{Qu13} 
\textsc{R. Quilodr\'an}, 
\newblock {\it On extremizing sequences for the adjoint restriction inequality on the cone.} 
\newblock J. London Math. Soc. \textbf{87} (2013), no.~1, 223--246. 


\bibitem{Qu15} 
\textsc{R. Quilodr\'an},
\newblock {\it Nonexistence of extremals for the adjoint restriction inequality on the hyperboloid.}
\newblock J. Anal. Math. {\bf 125} (2015), 37--70. 

\bibitem{RS12} 
\textsc{M. Ruzhansky and M. Sugimoto}, 
\newblock {\it Smoothing properties of evolution equations via canonical transforms and comparison principle.} 
\newblock Proc. Lond. Math. Soc. (3) {\bf 105} (2012), no.~2, 393--423. 

\bibitem{St93} 
\textsc{E. M. Stein},
\newblock {\it Harmonic Analysis: Real-Variable Methods, Orthogonality, and Oscillatory Integrals.}
\newblock Princeton University Press, Princeton, NJ, 1993.

\bibitem{St77} 
\textsc{R. S. Strichartz}, 
\newblock {\it Restrictions of Fourier transforms to quadratic surfaces and decay of solutions of wave equations.} 
\newblock Duke Math. J. {\bf 44} (1977), no.~3, 705--714. 

\bibitem{Ta04}
\textsc{T. Tao}, 
\newblock {\it Some recent progress on the restriction conjecture}, 
\newblock Fourier analysis and convexity, 217--243, {\it Appl. Numer. Harmon. Anal.}, Birkh\"{a}user Boston, Boston, MA, 2004. 

\bibitem{To75} 
\textsc{P. A. Tomas}, 
\newblock {\it A restriction theorem for the Fourier transform.} 
\newblock Bull. Amer. Math. Soc. {\bf 81} (1975), 477--478. 




\end{thebibliography}
\end{document}